\definecolor{babyblue}{rgb}{0.1, 0.6, 0.75}
\definecolor{orcidlogocol}{rgb}{0.1, 0.6, 0.75}
\newtheorem{theorem}{Theorem}[section]
\newtheorem{corollary}[theorem]{Corollary}
\newtheorem{proposition}[theorem]{Proposition}
\newtheorem{lemma}[theorem]{Lemma}
\theoremstyle{definition}
\newtheorem{remark}[theorem]{Remark}
\def\Aut{{\rm Aut}}
\def\Sym{{\rm Sym}}
\def\ker{{\rm ker}\,}
\def\ord{{\rm ord}}
\def\Skew{{\rm Skew}}
\newcommand{\NN}{\mathbb N}
\newcommand{\ZZ}{\mathbb Z}
\def\CC{{C\nolinebreak[4]\hspace{-.05em}\raisebox{.4ex}{\tiny\bf ++}}}
\begin{document}
\title{Quotients of skew morphisms of cyclic groups}
\date{\today}
\author[Martin Bachrat{\' y}]{Martin Bachrat{\' y}}
\address{Faculty of Civil Engineering, Slovak University of Technology, Bratislava 81005, Slovakia}
\email{martin.bachraty@stuba.sk}
\thanks{The author acknowledges the use of the {\sc Magma} system~\cite{Magma} to find examples of skew morphisms relevant to this paper. The author also acknowledges support from the APVV Research Grants 17-0428 and 19-0308, and the VEGA Research Grants 1/0206/20 and 1/0567/22.}

\begin{abstract}
A skew morphism of a finite group $B$ is a permutation $\varphi$ of $B$ that preserves the identity element of $B$ and has the property that for every $a\in B$ there exists a positive integer $i_a$ such that $\varphi(ab) = \varphi(a)\varphi^{i_a}(b)$ for all $b\in B$. The problem of classifying skew morphisms for all finite cyclic groups is notoriously hard, with no such classification available up to date. Each skew morphism $\varphi$ of $\ZZ_n$ is closely related to a specific skew morphism of $\ZZ_{|\!\langle \varphi \rangle\!|}$, called the quotient of $\varphi$. In this paper, we use this relationship and other observations to prove new theorems about skew morphisms of finite cyclic groups. In particular, we classify skew morphisms for all cyclic groups of order $2^em$ with $e\in \{0,1,2,3,4\}$ and $m$ odd and square-free. We also develop an algorithm for finding skew morphisms of cyclic groups, and implement this algorithm in {\sc Magma} to obtain a census of all skew morphisms for cyclic groups of order up to $161$.

During the preparation of this paper we noticed a few flaws in Section~5 of the paper \emph{Cyclic complements and skew morphisms of groups} [J.\ Algebra\/ {\bf 453} (2016), 68--100]. We propose and prove weaker versions of the problematic original assertions (namely Lemma~5.3(b), Theorem~5.6 and Corollary~5.7), and show that our modifications can be used to fix all consequent proofs (in the aforementioned paper) that use at least one of those problematic assertions. \\[0.4em]
\textit{Keywords: Skew morphism, cyclic group, coset-preserving, quotient, square-free.} \\[0.4em]
\textit{Math.\ Subj.\ Class.\ (2020)\,: 20B25, 05C25, 05E18}
\end{abstract}

\maketitle

\section{Introduction}\label{sec:intro}

A skew morphism of a finite group $B$ is a permutation $\varphi$ of $B$ that preserves the identity element of $B$ and has the property that for every $a\in B$ there exists a positive integer $i_a$ such that $\varphi(ab) = \varphi(a)\varphi^{i_a}(b)$ for all $b\in B$. The \emph{order} of a skew morphism, denoted by $\ord(\varphi)$, is defined as the order of the cyclic group $\langle \varphi \rangle$. Note that for each $a\in B$ there is a unique choice for $i_a$ such that $i_a\in \{1,2,\dots,\ord(\varphi)-1\}$ (unless $\varphi$ is the identity permutation). The function $\pi$ that maps each element $a\in B$ to this integer $i_a$ is called the \emph{power function} of $\varphi$, and it satisfies $\varphi(ab) = \varphi(a)\varphi^{\pi(a)}(b)$ for all $a,b\in B$. In the case when $\varphi$ is the identity permutation of $B$, we define $\pi(a)=1$ for all $a\in B$.

Skew morphisms were first introduced by Jajcay and {\v S}ir{\' a}{\v n} in~\cite{JajcaySiran}, with primary interest in their connection to the regular Cayley maps. Skew morphisms are also intriguing from a purely group-theoretical point of view, mainly due to their close relationship with group automorphisms, with which they share a number of important features. The problem of classifying all skew morphisms for given families of finite groups has gained much attention in the last two decades; see~\cite{ChenDuHeng, ConderJajcayTucker2007b, WangHuYuanZhang, ZhangDu} for example. Recently, in~\cite{BachratyConderVerret}, skew morphisms were classified for all finite simple groups, and we understand that a classification for dihedral groups is imminent; see~\cite{KovacsKwon}. On the other hand, the problem of finding all skew morphisms for finite cyclic groups remains open, despite recent positive progress, which we discuss next.

Automorphisms of a finite group $B$ are special cases of skew morphisms (with $\pi(a)=1$ for all $a\in B$), and as such can be viewed as an important family of skew morphisms. There are also other intriguing families of skew morphisms, for example, \emph{coset-preserving} (sometimes also called \emph{smooth}) skew morphisms, which are defined as skew morphisms satisfying $\pi(a)=\pi(\varphi(a))$ for all $a\in B$. Coset-preserving skew morphisms have been fully classified for all finite cyclic groups in~\cite{BachratyJajcay2017}. Another interesting family of skew morphisms that is fully understood for finite cyclic groups consists of all skew morphisms $\varphi$ such that $\varphi^2$ is an automorphism of the same group; see~\cite{HuKwonZhang}.

While there is no classification of skew morphisms of finite cyclic groups available to date, skew morphisms have been fully classified for some specific (infinite) families of finite cyclic groups. Most notably, this was done for cases where the order of a cyclic group is a prime~\cite{JajcaySiran} (in this case, all skew morphisms are automorphisms), a product of two distinct primes~\cite{KovacsNedela2011}, and any power of an odd prime~\cite{KovacsNedela2017}. Some partial progress for cyclic $2$-groups can be found in~\cite{DuHuLu}. 

Another approach for studying skew morphisms of finite cyclic groups is to find a connection between skew morphisms of a given cyclic group $B$ and skew morphisms of cyclic groups of smaller orders. Presumably the strongest finding to date made in this direction is the observation of Kov{\' a}cs and Nedela proved in~\cite{KovacsNedela2011} which states that if $\gcd(m,n) = \gcd(m,\phi(n))=\gcd(\phi(m),n)=1$, then the skew morphisms of $\ZZ_{mn}$ are exactly the direct products of skew morphisms of $\ZZ_{m}$ and $\ZZ_{n}$. There is also a useful connection between general skew morphisms and coset-preserving skew morphisms for finite cyclic groups. Namely, for each skew morphism $\varphi$ of a finite cyclic group $B$ there exists an exponent $e$ such that $\varphi^e$ is a non-trivial coset-preserving skew morphism of $B$; see~\cite{BachratyJajcay2016}.

In this paper, we combine a number of known facts about skew morphisms (which we summarise in Sections~\ref{sec:prel} and~\ref{sec:abel}) with new observations presented in Section~\ref{sec:quo}, to prove a number of theorems about skew morphisms of cyclic groups. Namely, in Section~\ref{sec:alg} we develop a new method for finding skew morphisms of cyclic groups, and implement it to obtain a census of all skew morphisms of cyclic groups of order up to $161$. (Up to the time of writing this paper, and apart from some specific orders, skew morphisms of cyclic groups were known only up to order $60$; see~\cite{ConderList}.) Further, in Section~\ref{sec:cospres} we show that all skew morphisms of $\ZZ_n$ are coset-preserving if and only if $n=2^em$ for some $e\in \{0,1,2,3,4\}$ and $m$ odd and square-free. As a consequence, we obtain a complete classification of skew morphisms for all cyclic groups of order expressible in this form, significantly expanding the list of finite cyclic groups for which such classification is available. During the review process, it was communicated to us that Kan Hu, Istv{\' a}n Kov{\' a}cs and Young Soo Kwon has recently submitted a paper devoted to similar ideas to those we investigated in Section~\ref{sec:cospres} of this paper.

\section{Preliminaries}\label{sec:prel}
In this section, we recall some definitions from group theory and provide some background from the theory of skew morphisms. All groups considered in this paper are assumed to be finite. For the cyclic group of order $n$ we use the additive notation $\ZZ_n$, and so the elements of $\ZZ_n$ may be viewed as integers in the interval $\left[ 0, n-1 \right]$. We also let $\Sym(G)$ denote the symmetric group on (the underlying set of) a group $G$. 

The \emph{core} of a subgroup $H$ in a group $G$ is the largest normal subgroup of $H$ contained in $G$. We say that $H$ is \emph{core-free} in $G$ if the core of $H$ in $G$ is trivial. A \emph{complement} for $H$ in $G$ is a subgroup $K$ of $G$ such that $G=HK$ and $H \cap K = \{ 1 \}$. The following theorem proved by Lucchini in~\cite{Lucchini} will be helpful.

\begin{theorem}[\cite{Lucchini}]
\label{thm:Lucc}
Let $C$ be a cyclic proper subgroup of a group $G$. If $C$ is core-free in $G$, then $|C|<|G:C|$.
\end{theorem}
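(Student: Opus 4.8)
The plan is to recast the statement in terms of a transitive permutation action and then run a minimal-counterexample argument that reduces to the primitive case. First I would form the action of $G$ by right multiplication on the set $\Omega$ of $m:=|G:C|$ right cosets of $C$. Because $C$ is core-free, the kernel of this action --- which is exactly $\Core_G(C)$ --- is trivial, so $G$ embeds in $\Sym(\Omega)$ as a transitive group with point stabiliser $C$, cyclic of order $n:=|C|$. The desired inequality $|C|<|G:C|$ is then precisely $n<m$, equivalently $|C|^2<|G|$.

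Next I would argue by induction on $|G|$, taking a counterexample of least order, so that $n\ge m$. The first goal is to show that $C$ is a maximal subgroup of $G$, i.e.\ that the action on $\Omega$ is primitive. Suppose $C<M<G$ with $M$ proper. If $C$ is core-free in $M$, then $|M|<|G|$ and the inductive hypothesis yields $|C|<|M:C|$; since $|M:C|$ divides $|G:C|$ we get $n<m$, a contradiction. Otherwise $K:=\Core_M(C)$ is a nontrivial subgroup of the cyclic group $C$, normalised by $M$. As $K\le C$ and $C$ is core-free in $G$, the subgroup $K$ cannot be normal in $G$, so $\mathrm{N}_G(K)$ is a proper subgroup containing $M$; choosing $M$ maximal forces $\mathrm{N}_G(K)=M$, and applying the inductive setup to this self-normalising configuration (equivalently, to the transitive action of $G$ on the conjugates of $K$, whose stabiliser is $M$) returns the same contradiction. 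Hence $C$ is maximal and $\Omega$ is primitive.

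For the primitive case I would split according to the socle $N:=\soc(G)$. If $N$ is abelian then $G$ is of affine type: $N\cong \C_p^{\,d}$ is regular of order $m=p^d$, and $C$ embeds in $\GL(d,p)=\Aut(N)$ as an irreducible cyclic subgroup (irreducibility coming from primitivity). For such a subgroup Schur's lemma makes the enveloping algebra $\mathbb{F}_p[C]$ a field, namely $\mathbb{F}_{p^d}$, so $C$ embeds in $\mathbb{F}_{p^d}^{\times}$ and $n=|C|$ divides $p^d-1<p^d=m$; this already gives $n<m$ and, incidentally, shows the bound is sharp, with $\AGL(1,p)$ realising $n=p-1$, $m=p$. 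The remaining, and genuinely harder, situation is when $N$ is non-abelian, where the hypothesis that the maximal subgroup $C$ be cyclic is extremely restrictive.

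I expect this last case to be the main obstacle. Controlling the order of a cyclic maximal subgroup relative to the degree in an almost simple (or diagonal/product-type) primitive group requires structural information about maximal subgroups and element orders in the relevant (almost) simple groups, and the affine analysis shows the argument cannot afford to lose any constant factor. I would handle it by reducing the diagonal, product and twisted-wreath types to the almost simple case and then invoking the known bounds on orders of cyclic subgroups of almost simple groups in terms of the indices of their maximal subgroups, forcing $n<m$ and completing the induction.
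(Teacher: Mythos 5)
The paper does not contain a proof of this statement at all: it is quoted from Lucchini's paper \cite{Lucchini}, where it is established by an argument that ultimately depends on the classification of finite simple groups. So the only question is whether your sketch would itself constitute a proof, and it would not, for two concrete reasons.

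First, your reduction to the primitive case breaks down in the subcase where $C<M<G$ and $K=\Core_M(C)$ is nontrivial. The transitive action of $G$ on the conjugates of $K$ (equivalently, on the cosets of $M=\mathrm{N}_G(K)$) has point stabiliser $M$, which is in general not cyclic, so the inductive hypothesis --- which concerns core-free \emph{cyclic} subgroups --- does not apply, and ``returns the same contradiction'' is unjustified. The natural repair, namely applying induction inside $M$ to the core-free cyclic subgroup $C/K$ of $M/K$, only yields $|C|/|K|<|M:C|$, which is too weak to contradict $|C|\ge |G:C|$. Second, and more fundamentally, the non-abelian socle case is where the entire difficulty of the theorem lives, and your plan for it --- reduce to the almost simple case and ``invoke the known bounds on orders of cyclic subgroups of almost simple groups in terms of the indices of their maximal subgroups'' --- is essentially a restatement of the theorem in that case rather than a proof of it: those bounds are precisely what Lucchini establishes, via the classification and a case-by-case analysis of the simple groups, and no elementary argument of the kind you give for the affine case is known. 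Your affine analysis via Schur's lemma is correct (with $\AGL(1,p)$ showing the bound is sharp), but as written the proposal proves only that special case.
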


Next, let $\varphi$ be a skew morphism of a group $B$, and identify $B$ with the subgroup of $\Sym(B)$ which acts by left multiplication. Then it can be easily checked that $B\langle \varphi \rangle$ is a subgroup of $\Sym(B)$ (see~\cite{KovacsNedela2011} for example). Moreover, $B \langle \varphi \rangle$ is a complementary factorisation and $\langle \varphi \rangle$ is core-free in $B\langle \varphi \rangle$ (see~\cite[Lemma 4.1]{ConderJajcayTucker2016}). A group $G$ containing $B$ which has a cyclic core-free complement $C$ for $B$ is called a \emph{skew product group} for a group $B$, and we say that $C$ is a \emph{skew complement} (for $B$ in $G$). The skew product group $B\langle \varphi \rangle$ (for $B$) with skew complement $\langle \varphi \rangle$ described in this paragraph is said to be \emph{induced} by $\varphi$.

Conversely, let $G$ be a skew product group for a group $B$, and let $c$ be a generator of a skew complement for $B$ in $G$. Note that every element $g\in G$ is uniquely expressible in a form $g=ac'$ with $a\in B$ and $c' \in C$. Then for every $a\in B$ there exists a unique $a' \in B$ and a unique exponent $j\in \{ 1, 2, \dots, |C|-1 \}$ such that $ca=a'c^j$, and this induces a bijection $\varphi\!: B \to B$ and a function $\pi\!: B \to \NN$, defined by $\varphi(a) = a'$ and $\pi(a) = j$. It can be easily checked that $\varphi$ is a skew morphism of $B$ with power function $\pi$. We say that $\varphi$ is \emph{induced} by the pair $(B,c)$.

Recall that if $\varphi$ is a skew morphism of $B$ with power function $\pi$, then we have $\varphi(ab) = \varphi(a)\varphi^{\pi(a)}(b)$ for all $a,b \in B$. (Hence, if $B=\ZZ_n$, then $\varphi(a+b) = \varphi(a)+\varphi^{\pi(a)}(b)$ for all $a,b \in \ZZ_n$.) Also recall that an automorphism of $B$ is a skew morphism with $\pi(a)=1$ for all $a\in B$. In what follows, it will be often convenient to distinguish between general skew morphisms and skew morphisms that are not automorphisms, and so we will refer to the latter as \emph{proper} skew morphisms. We say that $\varphi$ is \emph{trivial} if it is the identity permutation of $B$. The \emph{kernel} of $\varphi$, denoted by $\ker \varphi$, is the subset $\{a\in B \mid \pi(a) = 1 \}$ of $B$. By definition, $\varphi$ is an automorphism of $B$ if and only if $\ker \varphi = B$. In the case of proper skew morphisms $\ker \varphi$ is not equal to $B$, but it is always a subgroup of $B$; see~\cite[Lemma 4]{JajcaySiran}. 

Since $\ker\varphi$ is a subgroup of $B$ and also $\varphi(ab)=\varphi(a)\varphi(b)$ for all $a,b\in \ker\varphi$, it follows that $\varphi$ restricts to a group isomorphism from the kernel to its image. In particular, if $\ker \varphi$ is preserved by $\varphi$ set-wise, then $\varphi$ restricts to an automorphism of $\ker\varphi$. In~\cite{ConderJajcayTucker2007} this was shown to always be true for abelian groups. We also have the following.

\begin{lemma}[\cite{JajcaySiran}]
\label{lem:kernelpowerfun}
Let $\varphi$ be a skew morphism of a group $B$ with power function $\pi$. Then two elements $a,b\in B$ belong to the same right coset of the subgroup $\ker \varphi$ in $B$ if and only if $\pi(a)=\pi(b)$.
\end{lemma}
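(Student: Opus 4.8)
The statement consists of two implications, and throughout I write $K$ for $\ker\varphi$ and use that two elements $a,b\in B$ lie in a common right coset of $K$ exactly when $ab^{-1}\in K$, i.e.\ when $\pi(ab^{-1})=1$. My plan is to argue inside the skew product group $G=B\langle\varphi\rangle\le\Sym(B)$ induced by $\varphi$, with $B$ acting by left multiplication. Applying the description of the pair $(B,c)$ that induces a skew morphism with $c=\varphi$, the defining relation $ca=a'c^{j}$ becomes $\varphi a=\varphi(a)\varphi^{\pi(a)}$ in $G$, and therefore
\[
\varphi\,a\,\varphi^{-1}=\varphi(a)\,\varphi^{\pi(a)-1}\qquad\text{for every }a\in B .
\]
Since $\langle\varphi\rangle$ is a core-free complement for $B$ in $G$, every element of $G$ is uniquely a product of an element of $B$ and a power of $\varphi$; in particular $\varphi a\varphi^{-1}\in B$ if and only if $\pi(a)=1$, so $K=B\cap\varphi^{-1}B\varphi$. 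These two facts are the engine for both directions.

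For the first implication I would suppose that $a$ and $b$ lie in the same right coset, so that $a=kb$ for some $k\in K$. Conjugating by $\varphi$ and using $\pi(k)=1$ gives $\varphi a\varphi^{-1}=(\varphi k\varphi^{-1})(\varphi b\varphi^{-1})=\varphi(k)\,\varphi(b)\,\varphi^{\pi(b)-1}$. Comparing this with $\varphi a\varphi^{-1}=\varphi(a)\varphi^{\pi(a)-1}$ and invoking the uniqueness of the factorisation $G=B\langle\varphi\rangle$ to separate the $B$-part from the $\langle\varphi\rangle$-part, I read off $\varphi^{\pi(a)-1}=\varphi^{\pi(b)-1}$, whence $\pi(a)\equiv\pi(b)\pmod{\ord(\varphi)}$ and therefore $\pi(a)=\pi(b)$, both values lying in $\{1,\dots,\ord(\varphi)\}$.

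For the converse---which I expect to be the real obstacle---suppose $\pi(a)=\pi(b)=j$. Then $\varphi a\varphi^{-1}=\varphi(a)\varphi^{j-1}$ and $\varphi b\varphi^{-1}=\varphi(b)\varphi^{j-1}$, so
\[
\varphi(ab^{-1})\varphi^{-1}=(\varphi a\varphi^{-1})(\varphi b\varphi^{-1})^{-1}=\varphi(a)\varphi^{j-1}\varphi^{-(j-1)}\varphi(b)^{-1}=\varphi(a)\varphi(b)^{-1}\in B .
\]
By the characterisation $K=B\cap\varphi^{-1}B\varphi$ this forces $ab^{-1}\in K$, so $a$ and $b$ share a right coset of $K$, which would complete the proof.

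The delicate point is precisely this converse. A purely computational route through the product rule $\pi(xy)\equiv\sum_{i=0}^{\pi(x)-1}\pi(\varphi^{i}(y))\pmod{\ord(\varphi)}$ (obtained by expanding $\varphi((ab^{-1})b)$ in two ways) reduces the claim to showing that $\sum_{i=1}^{\pi(ab^{-1})-1}\pi(\varphi^{i}(b))\equiv0\pmod{\ord(\varphi)}$ forces $\pi(ab^{-1})=1$; but since each summand only satisfies $1\le\pi(\varphi^{i}(b))\le\ord(\varphi)$, this congruence alone does not pin down $\pi(ab^{-1})$, and the argument stalls. Passing to the conjugation relation in $G$ is exactly what cleanly circumvents this difficulty, and for that reason I would organise the whole proof around $G$ rather than around the power function directly.
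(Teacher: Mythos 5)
Your proof is correct. Note that the paper itself supplies no proof of this lemma --- it is quoted from \cite{JajcaySiran} --- so there is no in-text argument to compare against; but your route through the skew product group $G=B\langle\varphi\rangle$ is sound and is essentially a repackaging of machinery the paper does set up. Both directions rest on two facts you establish correctly: the unique factorisation $G=BC$ with $B\cap C=\{1\}$, which lets you separate the $B$-part from the $C$-part of $cac^{-1}=\varphi(a)c^{\pi(a)-1}$, and the identification $\ker\varphi=B\cap c^{-1}Bc$, which is exactly the content of Lemma~\ref{lem:kernelskewprod} (the largest subgroup $H\le B$ with $cHc^{-1}\subseteq B$). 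With those in hand, the forward direction ($a=kb$, $k\in\ker\varphi$, forces $c^{\pi(a)-1}=c^{\pi(b)-1}$) and the converse ($\pi(a)=\pi(b)$ forces $c(ab^{-1})c^{-1}\in B$, hence $ab^{-1}\in\ker\varphi$) are both complete; the passage from $\pi(a)\equiv\pi(b)\pmod{\ord(\varphi)}$ to equality is justified by the normalisation of $\pi$ into $\{1,\dots,\ord(\varphi)\}$. Your closing diagnosis is also accurate: the purely power-function computation via $\pi(xy)\equiv\sum_{i=0}^{\pi(x)-1}\pi(\varphi^i(y))$ handles the forward implication but does not by itself pin down $\pi(ab^{-1})$ in the converse, which is precisely why the conjugation argument in $G$ is the cleaner vehicle.
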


\begin{theorem}[\cite{ConderJajcayTucker2016}]
\label{thm:kernontrivial}
Every skew morphism of a non-trivial group has non-trivial kernel.
\end{theorem}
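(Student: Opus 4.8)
The plan is to squeeze the kernel index between two bounds: to bound $|B:\ker\varphi|$ from above by the order of $\varphi$, and to bound $\ord(\varphi)$ strictly below $|B|$ via Lucchini's theorem. Write $\varphi$ for a skew morphism of a non-trivial group $B$ with power function $\pi$. If $\varphi$ is the identity permutation then $\ker\varphi=B$ is already non-trivial and there is nothing to prove, so I may assume that $\varphi$ is non-trivial. I would then pass to the skew product group $G=B\langle\varphi\rangle\le\Sym(B)$ induced by $\varphi$. As recorded in Section~\ref{sec:prel}, this is a complementary factorisation with $B\cap\langle\varphi\rangle=\{1\}$ and $\langle\varphi\rangle$ core-free in $G$, so in particular $|G:\langle\varphi\rangle|=|B|$. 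Since $B$ is non-trivial, $\langle\varphi\rangle$ is a proper subgroup of $G$, and Theorem~\ref{thm:Lucc} applied to the cyclic core-free subgroup $\langle\varphi\rangle$ yields
\[
\ord(\varphi)=|\langle\varphi\rangle|<|G:\langle\varphi\rangle|=|B|.
\]

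Next I would control the kernel index through the power function. The value $\pi(a)$ always lies in $\{1,2,\dots,\ord(\varphi)\}$, so $\pi$ attains at most $\ord(\varphi)$ distinct values. By Lemma~\ref{lem:kernelpowerfun}, two elements of $B$ lie in the same right coset of $\ker\varphi$ exactly when they share the same power-function value; hence $\pi$ induces a well-defined injection from the set of right cosets of $\ker\varphi$ into $\{1,\dots,\ord(\varphi)\}$. Consequently $|B:\ker\varphi|$ equals the number of distinct values taken by $\pi$, and in particular $|B:\ker\varphi|\le\ord(\varphi)$. Combining this with the displayed inequality gives $|B:\ker\varphi|\le\ord(\varphi)<|B|$, so $|\ker\varphi|=|B|/|B:\ker\varphi|>1$ and $\ker\varphi$ is non-trivial, as required.

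The argument is short once these ingredients are assembled, and I do not expect any heavy computation. The one genuine idea, and the step I regard as the crux, is the realisation that the purely combinatorial constraint that $\pi$ has at most $\ord(\varphi)$ possible values converts, via Lemma~\ref{lem:kernelpowerfun}, into the structural bound $|B:\ker\varphi|\le\ord(\varphi)$; this is precisely the quantity that can be played off against the Lucchini inequality $\ord(\varphi)<|B|$. Everything else amounts to bookkeeping with the skew product group and its complementary factorisation.
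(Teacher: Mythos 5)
Your proof is correct, and it is essentially the argument behind the cited source: the paper itself imports Theorem~\ref{thm:kernontrivial} from \cite{ConderJajcayTucker2016} without proof, and the proof there runs exactly as you describe, combining the Lucchini bound $\ord(\varphi)<|B|$ (recorded here as Theorem~\ref{thm:orderofskew}, which you re-derive from Theorem~\ref{thm:Lucc}) with the observation via Lemma~\ref{lem:kernelpowerfun} that the number of right cosets of $\ker\varphi$ equals the number of distinct power-function values and hence is at most $\ord(\varphi)$. All the ingredients you use are available in Section~\ref{sec:prel}, so the argument is complete as written.
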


An immediate consequence of Theorem~\ref{thm:kernontrivial} is that every skew morphism of a group of prime order is an automorphism. The following facts about kernels of skew morphisms will be useful, too.

\begin{lemma}[\cite{ConderJajcayTucker2016}]
\label{lem:kernelskewprod}
Let $G$ be a skew product group for a group $B$ with skew complement $C$, and let $c$ be a generator of $C$. If~$\varphi$ is the skew morphism induced by $(B,c)$, then $\ker \varphi$ is the largest subgroup $H$ of $B$ for which $cHc^{-1}\subseteq B$. In particular, $\varphi$ is an automorphism of $B$ if and only if $B$ is normal in $G$.
\end{lemma}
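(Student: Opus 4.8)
The plan is to prove Lemma~\ref{lem:kernelskewprod} by directly unpacking the definition of the skew morphism $\varphi$ induced by the pair $(B,c)$, and then translating the condition $\pi(a)=1$ into a statement about conjugation by $c$. Recall from the excerpt that for each $a \in B$ we write $ca = \varphi(a)\,c^{\pi(a)}$, where $\varphi(a)\in B$ and $\pi(a) \in \{1,\dots,|C|-1\}$ is uniquely determined by the complementary factorisation $G = BC$ with $B \cap C = \{1\}$. The key observation is that $a \in \ker\varphi$, i.e.\ $\pi(a)=1$, is equivalent to $ca = \varphi(a)c$, which rearranges to $cac^{-1}=\varphi(a) \in B$. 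So membership in $\ker\varphi$ is governed exactly by whether $c$ conjugates $a$ back into $B$.

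First I would establish that $\ker\varphi = \{a \in B \mid cac^{-1}\in B\}$ using the equivalence just described; this set is a subgroup of $B$ since it is $B \cap c^{-1}Bc$, the intersection of two subgroups of $G$ (here I use that $B$ is a subgroup and conjugation by $c$ is an automorphism of $G$). The substance of the lemma, however, is the word \emph{largest}: I must show that $\ker\varphi$ contains every subgroup $H \leq B$ satisfying $cHc^{-1}\subseteq B$, and that $\ker\varphi$ itself satisfies $c(\ker\varphi)c^{-1}\subseteq B$. The latter is immediate from the set description. For the former, if $H \leq B$ with $cHc^{-1}\subseteq B$, then each $h \in H$ has $chc^{-1}\in B$, so $h$ lies in the set describing $\ker\varphi$, giving $H \subseteq \ker\varphi$. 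Thus $\ker\varphi$ is the unique largest such subgroup, and the characterisation is complete. A small point worth checking carefully is the boundary behaviour when $\pi(a)$ could equal $|C|$ rather than landing in $\{1,\dots,|C|-1\}$; since $\pi(a)=1$ corresponds to the genuinely interior value, and the normalisation of the exponent is fixed by the uniqueness in the factorisation, this causes no trouble.

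For the final sentence, I would argue that $\varphi$ is an automorphism of $B$ if and only if $\ker\varphi = B$ (this equivalence is recorded earlier in the excerpt), and by the characterisation just proved $\ker\varphi = B$ holds precisely when $cBc^{-1}\subseteq B$. Since $c$ generates $C$ and $G = BC$, the containment $cBc^{-1}\subseteq B$ for the generator $c$ propagates to $gBg^{-1} = B$ for all $g \in G$: indeed every $g$ is a product of elements of $B$ (which normalise $B$ trivially) and powers of $c$, so $cBc^{-1}\subseteq B$ together with the finiteness forcing $cBc^{-1}=B$ yields normality of $B$ in $G$. Conversely if $B \trianglelefteq G$ then certainly $cBc^{-1}=B \subseteq B$.

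I expect the main obstacle to be nothing deep but rather the bookkeeping around the exponent normalisation and the passage from $cBc^{-1}\subseteq B$ to full normality of $B$ in $G$. The containment only gives $c$-invariance a priori as a subset inclusion, so I would want to note that in a finite group a subgroup mapped into itself by conjugation is in fact mapped onto itself, and that $B$ being normalised by a generating set $B \cup \{c\}$ of $G$ suffices for $B \trianglelefteq G$. These are routine once stated, so the argument should be short and clean.
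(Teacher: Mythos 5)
Your argument is correct. Note that the paper does not supply its own proof of Lemma~\ref{lem:kernelskewprod} --- it is quoted from \cite{ConderJajcayTucker2016} --- but your proof is essentially the standard one from that source: the equivalence $\pi(a)=1 \Leftrightarrow cac^{-1}\in B$ identifies $\ker\varphi$ with $B\cap c^{-1}Bc$, which is visibly the largest subgroup of $B$ conjugated into $B$ by $c$, and the normality statement then follows since $B$ is normalised by the generating set $B\cup\{c\}$ of $G$ (with finiteness upgrading $cBc^{-1}\subseteq B$ to equality). Your handling of the exponent normalisation is also right: $c^{\pi(a)}=1$ would force $c\in B\cap C=\{1\}$, so the value $\pi(a)=|C|$ genuinely cannot occur for non-trivial $C$.
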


\begin{proposition}[\cite{ConderJajcayTucker2016}]
\label{prop:skew induced}
Let $\varphi$ be a skew morphism of a group $B$, and let $N$ be a subgroup of $\ker\varphi$ that is normal in $B$ and preserved by $\varphi$. Then the mapping $\varphi_N^*\!: B/N \to B/N$ given by $\varphi_N^*(x) = N\varphi(x)$ is a well-defined skew morphism of $B/N$.
\end{proposition}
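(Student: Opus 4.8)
The plan is to verify, in turn, that $\varphi_N^*$ is well defined, that it fixes the identity coset, that it is a bijection, and finally that it satisfies the defining functional equation of a skew morphism. Throughout I will write elements of $B/N$ as cosets $Na$ with $a\in B$; since $N$ is normal in $B$, left and right cosets coincide and $Na\cdot Nb = N(ab)$.

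First I would establish well-definedness, which is where both hypotheses on $N$ are used. Suppose $Na = Na'$, so that $a' = na$ for some $n\in N$. Because $N\subseteq\ker\varphi$ we have $\pi(n)=1$, and hence $\varphi(a') = \varphi(na) = \varphi(n)\varphi^{\pi(n)}(a) = \varphi(n)\varphi(a)$. Since $N$ is preserved by $\varphi$, the element $\varphi(n)$ lies in $N$, so $\varphi(a') \in N\varphi(a)$ and therefore $N\varphi(a') = N\varphi(a)$. This shows that $\varphi_N^*(Na)$ does not depend on the chosen representative.

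Next, $\varphi_N^*(N) = N\varphi(1) = N$ since $\varphi$ fixes the identity of $B$, so $\varphi_N^*$ preserves the identity element of $B/N$. For bijectivity, I would note that $B/N$ is finite, so it suffices to check surjectivity: given any coset $Nc$, surjectivity of $\varphi$ on $B$ yields $b\in B$ with $\varphi(b)=c$, whence $\varphi_N^*(Nb)=Nc$. A short induction then gives the iterate formula $(\varphi_N^*)^k(Nb) = N\varphi^k(b)$ for all $k\ge 0$: the case $k=1$ is the definition, and the inductive step applies $\varphi_N^*$ to the coset $N\varphi^{k-1}(b)$ using $\varphi^{k-1}(b)$ as a representative, with well-definedness guaranteeing that this choice is immaterial.

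Finally I would verify the skew-morphism identity. For arbitrary $a,b\in B$,
\begin{equation*}
\varphi_N^*(Na\cdot Nb) = N\varphi(ab) = N\varphi(a)\varphi^{\pi(a)}(b) = N\varphi(a)\cdot N\varphi^{\pi(a)}(b) = \varphi_N^*(Na)\,(\varphi_N^*)^{\pi(a)}(Nb),
\end{equation*}
where the third equality uses normality of $N$ and the last uses the iterate formula. Thus $\varphi_N^*$ satisfies the defining equation with the exponent $\pi(a)$ attached to the coset $Na$. I expect the only genuinely delicate point to be well-definedness, and in particular noticing that it requires \emph{both} that $N\subseteq\ker\varphi$ (to collapse $\pi(n)$ to $1$) and that $\varphi(N)=N$ (to keep $\varphi(n)$ inside $N$); everything else is a routine transfer of the identities for $\varphi$ to the quotient. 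As an optional tidiness check, Lemma~\ref{lem:kernelpowerfun} shows that the exponent $\pi(a)$ depends only on the coset $Na$ (since $N\subseteq\ker\varphi$ forces the representatives of $Na$ into a single right coset of $\ker\varphi$), so the induced power function is itself well defined on $B/N$.
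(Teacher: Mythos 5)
Your proof is correct and complete: the well-definedness step correctly isolates where both hypotheses on $N$ are needed ($N\subseteq\ker\varphi$ to collapse $\pi(n)$ to $1$, and $\varphi(N)=N$ to keep $\varphi(n)$ in $N$), and the transfer of the skew identity via normality of $N$ and the iterate formula is exactly the standard argument. The paper itself imports this proposition from \cite{ConderJajcayTucker2016} without reproducing a proof, so there is nothing to diverge from; your verification is the natural one and fills that gap correctly.
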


\begin{lemma}[\cite{ConderJajcayTucker2016}]
\label{lem:tech(a)}
Let $\varphi$ be a skew morphism of a finite abelian group $A$ with power function $\pi$. Also suppose that $N$ is any non-trivial subgroup of $\ker \varphi$ preserved by $\varphi$, let $i$ be the exponent of $N$, and let $\varphi_N^*$ be the skew morphism of $A/N$ induced by $\varphi$. If $a$ is an element of $A$ such that $Na$ lies in the kernel of $\varphi_N^*$, then $i\pi(a)\equiv i \pmod{\ord(\varphi)}$ and, in particular, if $\gcd(i,\ord(\varphi))=1$, then $a\in \ker\varphi$.  
\end{lemma}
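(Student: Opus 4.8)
The plan is to reduce the conclusion to a single divisibility and then produce it from two complementary facts about the permutation $\varphi^{k}$, where I set $k=\pi(a)-1$. Since $e\pi(a)\equiv e \pmod{\ord(\varphi)}$ is equivalent to $\ord(\varphi)\mid e(\pi(a)-1)=ek$, i.e.\ to $\varphi^{ek}=(\varphi^{k})^{e}$ being the identity, it suffices to understand how $\varphi^{k}$ acts on $B$. First I would unpack the hypothesis $Na\in\ker\varphi_N^*$. Using that $\varphi_N^*$ is well defined by Proposition~\ref{prop:skew induced} (here $N$ is normal since $B$ is abelian, and $N$ is $\varphi$-invariant by assumption) together with the easy induction $(\varphi_N^*)^{j}(Ny)=N\varphi^{j}(y)$, I would compare the two ways of expanding $\varphi_N^*(Nx\cdot Ny)$ to get $N\varphi^{\pi_N^*(Nx)}(y)=N\varphi^{\pi(x)}(y)$ for all $y$, where $\pi_N^*$ is the power function of $\varphi_N^*$. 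Taking $x=a$ and using $\pi_N^*(Na)=1$ then gives $N\varphi^{\pi(a)}(y)=N\varphi(y)$ for all $y$; replacing $y$ by $\varphi^{-1}(z)$ shows that $\varphi^{k}(z)z^{-1}\in N$ for every $z\in B$. Thus the first fact is that $\varphi^{k}$ is trivial \emph{modulo} $N$.

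The second, and crucial, fact is that $\varphi^{k}$ fixes $N$ \emph{pointwise}. To see this, fix $n\in N\subseteq\ker\varphi$, so $\pi(n)=1$, and exploit commutativity: expanding both sides of $\varphi(an)=\varphi(na)$ gives $\varphi(a)\varphi^{\pi(a)}(n)=\varphi(n)\varphi(a)$, and cancelling $\varphi(a)$ (legitimate since $B$ is abelian) yields $\varphi^{\pi(a)}(n)=\varphi(n)$, hence $\varphi^{k}(n)=n$ for all $n\in N$. I expect this to be the main obstacle, in the sense that it is the one step that genuinely uses the hypothesis that $B$ is abelian; without it the restriction $\varphi|_{N}$ could have order unrelated to $e$, and the statement would fail. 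It is worth noting that this identity, unlike the first fact, does not need the $\ker\varphi_N^*$ hypothesis at all.

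Finally I would combine the two facts. Because $N$ is $\varphi$-invariant and contained in $\ker\varphi$, induction on the relation $\varphi(n b)=\varphi(n)\varphi(b)$ (valid as each $\varphi^{i}(n)$ again lies in $N\subseteq\ker\varphi$) gives $\varphi^{k}(nz)=\varphi^{k}(n)\varphi^{k}(z)=n\,\varphi^{k}(z)$ for all $n\in N$, $z\in B$, using $\varphi^{k}(n)=n$. Writing $\varphi^{k}(z)=z f(z)$ with $f(z)\in N$ (the first fact), this shows that $\varphi^{k}$ stabilises each coset $Nz$ and acts on it as translation by the element $f(z)\in N$; consequently $(\varphi^{k})^{e}$ acts on $Nz$ as translation by $f(z)^{e}$, which is trivial since $\exp(N)=e$. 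Hence $\varphi^{ek}=(\varphi^{k})^{e}$ is the identity, giving $\ord(\varphi)\mid ek$ and therefore $e\pi(a)\equiv e \pmod{\ord(\varphi)}$. For the last assertion, if $\gcd(e,\ord(\varphi))=1$ then $\ord(\varphi)\mid\pi(a)-1$; as $\pi(a)$ is the canonical representative in $\{1,\dots,\ord(\varphi)\}$, this forces $\pi(a)=1$, i.e.\ $a\in\ker\varphi$.
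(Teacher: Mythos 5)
Your proof is correct and complete: the reduction to showing $\varphi^{e(\pi(a)-1)}=\mathrm{id}$, the computation that $\varphi^{\pi(a)-1}$ is trivial modulo $N$ (from $Na\in\ker\varphi_N^*$), and the observation that $\varphi^{\pi(a)-1}$ fixes $N$ pointwise (from commutativity and $N\subseteq\ker\varphi$) all check out, and they combine exactly as you describe. Note that the paper itself states this lemma without proof, importing it from~\cite{ConderJajcayTucker2016}, so there is no in-paper argument to compare against; your argument is essentially the standard one for that result.
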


Note that if $\varphi$ is a skew morphism of $\ZZ_n$, then $\ker\varphi$ is normal in $\ZZ_n$ and $\varphi$ restricts to an automorphism of $\ker\varphi$. Moreover, since $\ZZ_n$ is cyclic, so is (its subgroup) $\ker\varphi$. Noting that every automorphism of a cyclic group preserves all of its subgroups, we deduce that $\varphi$ preserves all subgroups of $\ker\varphi$. Hence it follows from Proposition~\ref{prop:skew induced} that $\varphi_N^*$ is a well defined skew morphism of $\ZZ_n/N$ for each subgroup $N$ of $\ker\varphi$. In the case when $N=\ker\varphi$, we will write simply $\varphi^*$ instead of $\varphi_N^*$.

We proceed with some useful facts about orders of skew morphisms.

\begin{proposition}[\cite{KovacsNedela2011}]
\label{prop:skewgenorbit}
Let $\varphi$ be a skew morphism of a group $B$, and let $T$ be an orbit of $\langle \varphi \rangle$. If $\langle T \rangle = B$, then $\ord(\varphi) = |T|$. 
\end{proposition}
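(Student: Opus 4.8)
The plan is to pass to the skew product group $G = B\langle\varphi\rangle$ induced by $\varphi$ and to exploit the fact that its skew complement $C = \langle\varphi\rangle$ is core-free in $G$. Write $c=\varphi$ for the generator of $C$, so that, as recorded in Section~\ref{sec:prel}, one has $ca = \varphi(a)\,c^{\pi(a)}$ for every $a\in B$. Set $m = |T|$. Since $T$ is a single orbit of the cyclic group $\langle\varphi\rangle$, the orbit--stabiliser relation gives $m \mid \ord(\varphi)$, so it suffices to prove $\varphi^m = \mathrm{id}$, equivalently $c^m = 1$; together with $m\mid\ord(\varphi)$ this yields $m=\ord(\varphi)$.

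First I would iterate the defining relation to obtain, for all $a\in B$ and all $i\ge 0$,
\[
c^i a = \varphi^i(a)\,c^{\sigma_i(a)}, \qquad \sigma_i(a) = \sum_{j=0}^{i-1}\pi\!\left(\varphi^j(a)\right),
\]
by a routine induction on $i$. Now fix any $a\in T$. Because $T$ is the $\langle\varphi\rangle$-orbit of $a$ and $|T| = m$, the elements $\varphi^0(a),\dots,\varphi^{m-1}(a)$ are precisely the elements of $T$; hence $\varphi^m(a) = a$ and the exponent $\sigma_m(a) = \sum_{t\in T}\pi(t)$ equals a single integer $S$ independent of the chosen $a\in T$. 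Specialising the displayed identity to $i=m$ therefore gives $c^m a = a\,c^{S}$, that is,
\[
a^{-1}c^m a = c^{S}\in C \qquad\text{for every } a\in T.
\]

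The key observation is then that conjugation by $a$ carries $\langle c^m\rangle$ into $C$ \emph{while preserving its order}. Indeed $a^{-1}\langle c^m\rangle a = \langle a^{-1}c^m a\rangle = \langle c^S\rangle$ is a subgroup of the cyclic group $C$ having the same order as $\langle c^m\rangle$, and in a cyclic group there is a unique subgroup of each order, so $a^{-1}\langle c^m\rangle a = \langle c^m\rangle$. Thus every $a\in T$ normalises $\langle c^m\rangle$; since $\langle T\rangle = B$ and the normaliser is a subgroup, $B$ normalises $\langle c^m\rangle$. As $C$ is abelian it normalises $\langle c^m\rangle$ too, and $G = BC$, so $\langle c^m\rangle\trianglelefteq G$. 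But $\langle c^m\rangle\le C$ and $C$ is core-free in $G$, whence $\langle c^m\rangle = 1$ and $c^m = 1$, finishing the argument.

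The main obstacle to guard against is the temptation to argue, as one would for an automorphism, that a power of $\varphi$ fixing a generating set must be trivial: this is false for arbitrary permutations, and in general $a^{-1}ca$ need not lie in $C$ at all. The crux is that although the individual conjugate $a^{-1}ca$ may escape $C$, the particular element $a^{-1}c^m a$ does land in $C$ (via the power-function sum $\sigma_m$) and retains the full order of $c^m$; this is exactly what upgrades the pointwise fixing of $T$ into the normality of $\langle c^m\rangle$ in $G$, at which point the core-freeness of $C$ closes the proof.
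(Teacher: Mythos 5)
Your argument is correct. The paper itself gives no proof of this proposition --- it is quoted from~\cite{KovacsNedela2011} --- so there is no in-paper argument to compare against; what you have written is a valid, self-contained derivation using only facts the paper records in Section~\ref{sec:prel} (the factorisation $G=B\langle\varphi\rangle$ and the core-freeness of $\langle\varphi\rangle$ in $G$). Each step checks out: the iterated relation $c^i a=\varphi^i(a)c^{\sigma_i(a)}$ follows by the induction you indicate; $|T|\mid\ord(\varphi)$ is orbit--stabiliser; the identity $a^{-1}c^ma=c^S$ for $a\in T$ is exactly the specialisation at $i=m$; and the uniqueness of subgroups of a given order in the cyclic group $C$ upgrades this to $a\in N_G(\langle c^m\rangle)$, whence $\langle c^m\rangle$ is normal in $G=\langle T\rangle C$ and must be trivial. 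Your closing remark correctly identifies the one genuine pitfall (a power of $\varphi$ fixing a generating set need not be trivial, since $\varphi$ is not a homomorphism), and your route around it --- that $a^{-1}c^ma$ lands back in $C$ with the full order of $c^m$ --- is precisely the right mechanism. This is essentially the standard proof of this fact in the literature.
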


\begin{theorem}[\cite{ConderJajcayTucker2016}]
\label{thm:orderofskew}
The order of a skew morphism of a non-trivial group $B$ is less than the order of $B$.
\end{theorem}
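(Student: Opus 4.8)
The plan is to recognise $\ord(\varphi)$ as the order of a cyclic core-free complement inside the associated skew product group, and then to obtain the bound as an immediate consequence of Lucchini's Theorem~\ref{thm:Lucc}. Concretely, I would identify $B$ with the subgroup of $\Sym(B)$ acting by left multiplication, and set $G=B\langle \varphi \rangle$ and $C=\langle \varphi \rangle$, so that $\ord(\varphi)=|C|$ by definition.

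As recalled in the Preliminaries, $G$ is a skew product group for $B$ with skew complement $C$; in particular $C$ is cyclic and core-free in $G$, the product $G=BC$ is a complementary factorisation, and $B\cap C=\{1\}$. The first step is to check that $C$ is a \emph{proper} subgroup of $G$, which is needed to apply Lucchini's theorem. Indeed, if one had $C=G$, then $B\subseteq C$, and combined with $B\cap C=\{1\}$ this would force $B=\{1\}$, contradicting the hypothesis that $B$ is non-trivial. Hence $C$ is a cyclic, proper, core-free subgroup of $G$.

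Now Theorem~\ref{thm:Lucc} applies directly and gives $|C|<|G:C|$. Since the factorisation $G=BC$ is complementary with $B\cap C=\{1\}$, we have $|G|=|B|\,|C|$, and therefore $|G:C|=|G|/|C|=|B|$. Combining these yields $\ord(\varphi)=|C|<|G:C|=|B|$, as required.

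I do not expect a genuine obstacle here: once the skew product group $B\langle \varphi \rangle$ is set up and the core-free complementary-factorisation structure is invoked (both already recorded above), the statement follows at once from Lucchini's theorem. The only points requiring care are the verification that $C$ is proper (using non-triviality of $B$) and the index computation $|G:C|=|B|$, which relies on $B\cap C=\{1\}$.
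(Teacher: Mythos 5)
Your proof is correct and follows exactly the argument used for this result (which the paper cites from Conder--Jajcay--Tucker and reproduces at the start of Section~\ref{sec:quo}): realise $\ord(\varphi)$ as the order of the cyclic core-free complement $C=\langle\varphi\rangle$ in the skew product group $G=BC$, note $|G:C|=|B|$ from the complementary factorisation, and apply Lucchini's Theorem~\ref{thm:Lucc}. The two points you flag for care (properness of $C$ and the index computation) are handled correctly.
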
 

\begin{theorem}[\cite{KovacsNedela2011}]
\label{thm:orderofskewcyclic}
If $\varphi$ is a skew morphism of a group $\ZZ_n$, then $\ord(\varphi)$ is a divisor of $n\phi(n)$. Moreover, if $\gcd(\ord(\varphi),n)=1$, then $\varphi$ is an automorphism of $\ZZ_n$.
\end{theorem}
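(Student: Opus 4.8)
The plan is to work inside the skew product group $G=B\langle\varphi\rangle$, where $B=\C_n$ sits in $\Sym(B)$ as its regular representation and $C=\langle\varphi\rangle$ is the core-free complement with $|C|=\ord(\varphi)$, so that $|G|=n\ord(\varphi)$. Two structural observations drive everything. First, $G=BC$ is a product of two abelian subgroups, hence solvable by a theorem of It\^o; this solvability is what will settle the second assertion. Second, since $B$ is abelian, any power of $\varphi$ lying in $C_G(B)$ would be a right translation fixing the identity, hence trivial, so $C_G(B)=B$. I would also keep Proposition~\ref{prop:skewgenorbit} and Theorem~\ref{thm:orderofskew} (the latter giving $\ord(\varphi)<n$) at hand as size controls.

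For the divisibility $\ord(\varphi)\mid n\phi(n)$ I would separate an ``automorphism part'' from a ``proper part''. The powers of $\varphi$ that normalise $B$ are exactly those lying in $\Aut(\C_n)$ (viewed in $\Sym(B)$ as the identity-stabiliser of the holomorph of $B$), and since $C_G(B)=B$ the quotient $N_G(B)/B$ embeds in $\Aut(\C_n)$; hence $d':=|N_G(B):B|=|\langle\varphi\rangle\cap\Aut(\C_n)|$ divides $\phi(n)$. The complementary factor is $q:=\ord(\varphi)/d'=|G:N_G(B)|$, the number of $G$-conjugates of $B$. To organise an induction I would use the kernel: by Lemma~\ref{lem:kernelabelian}, $\varphi$ restricts to an automorphism of $K=\ker\varphi\cong\C_{n/k}$, whose order divides $\phi(n/k)\mid\phi(n)$, and by Proposition~\ref{prop:skew induced} it induces a skew morphism $\varphi^*$ of $\C_k=B/K$ with $k=|B:K|$ a proper divisor of $n$; induction on $n$ then gives $\ord(\varphi^*)\mid k\phi(k)$, and the remaining task is to reassemble $\ord(\varphi)$ from $\ord(\varphi^*)$, $\ord(\varphi|_K)$, and the coset-wise permutation $\varphi^{\ord(\varphi^*)}$.

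For the second assertion, suppose $\gcd(\ord(\varphi),n)=1$; then $B$ is a Hall subgroup of the solvable group $G$, its order $n$ being coprime to its index $\ord(\varphi)$. Consider the Fitting subgroup $F=F(G)$ and split it as $F=F_1\times F_2$, where $F_1$ collects the Sylow factors at primes dividing $n$ and $F_2$ those at primes not dividing $n$. Both factors are characteristic in $F$, hence normal in $G$; but $F_2$ is a normal subgroup whose order is coprime to $n$, so it lies in every Hall subgroup of that order, in particular in the core-free complement $C=\langle\varphi\rangle$, forcing $F_2=1$. Thus $F=F_1$ is a normal subgroup whose order is a product of primes dividing $n$, so it lies in every Hall subgroup of that order, giving $F\le B$. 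Since $B$ is abelian it centralises $F$, and in a solvable group $C_G(F(G))\le F(G)$, whence $B\le C_G(F)\le F\le B$. Therefore $B=F$ is normal in $G$, and Lemma~\ref{lem:kernelskewprod} gives that $\varphi$ is an automorphism of $\C_n$, as required.

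The routine ingredients are the normaliser computation, the identity $C_G(B)=B$, the kernel restriction, and the Fitting-subgroup argument for the coprime case, with Theorem~\ref{thm:Lucc} and Theorem~\ref{thm:orderofskew} as sanity checks on sizes. The main obstacle I anticipate is in the first statement: a careless induction yields only $\ord(\varphi)\mid n\,\phi(k)\phi(n/k)$, and $\phi(k)\phi(n/k)$ need not divide $\phi(n)$ (already for $k=n/k=3$), so the automorphism contributions coming from $K$ and from $B/K$ must be shown to merge into a single factor dividing $\phi(n)$ rather than being counted twice. Equivalently, one must control the proper part $q=|G:N_G(B)|$ against $n\phi(n)$. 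I expect this sharpening---carried out through the conjugation action of $G$ on the set of conjugates of $B$, together with Lucchini's theorem applied after factoring out $\Core_G(B)$---to be the delicate heart of the proof.
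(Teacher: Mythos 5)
This statement is imported verbatim from \cite{KovacsNedela2011} (note the citation in the theorem header), so the paper contains no proof of it to compare against; your proposal has to stand on its own. Judged that way, it splits cleanly in two. The second assertion you do prove: $G=B\langle\varphi\rangle$ is a product of two abelian subgroups, hence solvable by It\^o; when $\gcd(\ord(\varphi),n)=1$ both $B$ and $C=\langle\varphi\rangle$ are Hall subgroups, the part $F_2$ of the Fitting subgroup supported on primes not dividing $n$ is a normal subgroup forced inside the core-free complement $C$ and hence trivial, and then $B\le C_G(F(G))\le F(G)\le B$ gives $B\trianglelefteq G$, so Lemma~\ref{lem:kernelskewprod} applies. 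That argument is correct (and your identification $C_G(B)=B$ for the regular abelian $B$ is right), but it is far heavier than necessary: the paper's Lemma~\ref{lem:tech(b)} says a proper skew morphism of an abelian group has $\ord(\varphi)$ sharing a non-trivial divisor with the exponent of $\ker\varphi$, which divides $n$ when $B$ is cyclic, so the second assertion is an immediate two-line corollary.

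The first assertion, $\ord(\varphi)\mid n\phi(n)$, is where the theorem actually lives, and you have not proved it. You correctly obtain $d'=|N_G(B):B|\mid\phi(n)$ from $C_G(B)=B$, but the complementary factor $q=|G:N_G(B)|=\ord(\varphi)/d'$ is never shown to divide $n$, and the kernel induction you sketch yields (as you yourself observe) only $\ord(\varphi)\mid n\,\phi(k)\phi(n/k)$, which need not divide $n\phi(n)$ (already $n=9$, $k=3$ gives $\phi(3)\phi(3)=4\nmid 6=\phi(9)$). The ``reassembly'' of $\ord(\varphi)$ from $\ord(\varphi^*)$ and $\ord(\varphi|_K)$, equivalently the control of $q$ against $n$, is precisely the content of the theorem, and your proposal defers it as an anticipated difficulty rather than resolving it. So the divisibility claim remains a genuine gap: you have a correct proof of the second sentence of the statement by a different (Hall/Fitting) route, and only a plan for the first.
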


The \emph{periodicity} of a skew morphism $\varphi$ of a group $B$, denoted by $p_{\varphi}$, is the smallest positive integer such that $\pi(a)=\pi(\varphi^{p_{\varphi}}(a))$ for all $a\in B$. Similarly, the \emph{periodicity} of $a\in B$ (with respect to $\varphi$) is the smallest positive integer $p_a$ such that $\pi(a)=\pi(\varphi^{p_a}(a))$. Note that if $\ker\varphi$ is preserved by $\varphi$ (which is always true if $B$ is abelian), then the periodicity of $\varphi$ can be defined equivalently as the order of $\varphi^*$.

Recall that $\varphi$ is coset-preserving if $\pi(a)=\pi(\varphi(a))$ for all $a\in B$. Equivalently, coset-preserving skew morphisms can be viewed as skew morphism that preserves all right cosets of $\ker\varphi$ in $B$, or as skew morphisms with the periodicity equal to $1$. The following theorem about periodicities underlines the importance of coset-preserving skew morphisms in the study of skew morphisms of abelian (and, in particular, cyclic) groups. 

\begin{theorem}[\cite{BachratyJajcay2016}]
\label{thm:powperiod}
If $\varphi$ is a skew morphism of an abelian group $A$, then $\varphi^{p_\varphi}$ is a coset-preserving skew morphism of $A$. Moreover, if $A$ is cyclic, $b$ is a generator of $A$, and $\varphi$ is non-trivial, then $p_\varphi=p_b$ and $p_\varphi < \ord(\varphi)$.
\end{theorem}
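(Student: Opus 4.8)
The plan is to rest everything on the iterated form of the defining relation,
\[
\varphi^t(ab)=\varphi^t(a)\,\varphi^{\sigma_t(a)}(b),\qquad \sigma_t(a)=\sum_{i=0}^{t-1}\pi(\varphi^i(a)),
\]
obtained by iterating $\varphi(ab)=\varphi(a)\varphi^{\pi(a)}(b)$, together with the fact recorded just above the statement that for abelian $B$ one has $p_\varphi=\ord(\varphi^*)$, where $\varphi^*$ is the skew morphism of $B/\ker\varphi$ induced by $\varphi$ (legitimate since $\ker\varphi$ is $\varphi$-invariant by Lemma~\ref{lem:kernelabelian}). Write $m=\ord(\varphi)$, $K=\ker\varphi$ and $p=p_\varphi$.

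For the first assertion I would show that $\varphi^{p}$ is a skew morphism before addressing coset-preservation. Reading off the displayed identity with $t=p$, the map $\varphi^{p}$ is a skew morphism exactly when $\varphi^{\sigma_p(a)}\in\langle\varphi^{p}\rangle$ for every $a$, i.e.\ when $d:=\gcd(p,m)$ divides $\sigma_p(a)$. To establish this I would invoke periodicity in the form $\pi(\varphi^{i+p}(a))=\pi(\varphi^i(a))$, which gives $\sigma_{pk}(a)=k\,\sigma_p(a)$; taking $k=m/d$, so that $pk=(p/d)m$ is a multiple of $m$ and $\varphi^{pk}=\mathrm{id}$, and cancelling $a$ in the identity at $t=pk$ yields $\varphi^{(m/d)\sigma_p(a)}=\mathrm{id}$, whence $m\mid(m/d)\sigma_p(a)$ and so $d\mid\sigma_p(a)$. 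Granting that $\varphi^p$ is a skew morphism, coset-preservation would follow from two checks: first $K\subseteq\ker(\varphi^p)$ (for $a\in K$ one has $\sigma_p(a)=p$, which forces the $\varphi^p$-power value of $a$ to be $1$), and second that $\varphi^p(a)$ lies in the same $K$-coset as $a$, which is immediate from $\pi(\varphi^p(a))=\pi(a)$ and Lemma~\ref{lem:kernelpowerfun}. Combining the two, $\varphi^p(a)$ and $a$ share a $\ker(\varphi^p)$-coset, so Lemma~\ref{lem:kernelpowerfun} applied to $\varphi^p$ gives that the $\varphi^p$-power function agrees on $a$ and $\varphi^p(a)$; that is precisely coset-preservation.

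For the \emph{moreover} clause, assume $B$ cyclic with generator $b$. To get $p_\varphi=p_b$ I would apply Proposition~\ref{prop:skewgenorbit} to $\varphi^*$: the cyclic group $B/K$ is generated by $\bar b=Kb$, so the $\langle\varphi^*\rangle$-orbit of $\bar b$ generates $B/K$ and hence has size $\ord(\varphi^*)=p_\varphi$; but that size is the least $t$ with $(\varphi^*)^t(\bar b)=\bar b$, which by Lemma~\ref{lem:kernelpowerfun} is the least $t$ with $\pi(\varphi^t(b))=\pi(b)$, namely $p_b$. For the strict inequality the crux is a uniform bound: since by Lemma~\ref{lem:kernelpowerfun} the power function $\pi$ is constant on $K$-cosets and takes distinct values on distinct $K$-cosets inside $\{1,\dots,m\}$, we always have $|B:K|\le m$. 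If $\varphi$ is an automorphism then $p_\varphi=1<m$ because $\varphi\neq\mathrm{id}$; otherwise $B/K$ is non-trivial and Theorem~\ref{thm:orderofskew} applied to $\varphi^*$ gives $p_\varphi=\ord(\varphi^*)<|B:K|\le m$.

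The step I expect to be the genuine obstacle is verifying that $\varphi^{p_\varphi}$ is a skew morphism at all, since powers of a skew morphism need not be skew morphisms; everything turns on the divisibility $\gcd(p_\varphi,m)\mid\sigma_{p_\varphi}(a)$, and it is exactly the periodicity hypothesis that makes $\sigma$ additive over blocks of length $p_\varphi$ and so supplies this divisibility. Once that is in hand, the coset-preservation is bookkeeping with Lemma~\ref{lem:kernelpowerfun}, and the strict inequality collapses to the clean estimate $|B:\ker\varphi|\le\ord(\varphi)$ fed into Theorem~\ref{thm:orderofskew}.
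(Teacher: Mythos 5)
This theorem is quoted in the paper from \cite{BachratyJajcay2016} without proof, so there is no internal argument to compare against; judged on its own, your proof is correct and is essentially a reconstruction of the argument in that source. Every step checks out: the iterated identity $\varphi^t(ab)=\varphi^t(a)\varphi^{\sigma_t(a)}(b)$ is the standard induction; the block-additivity $\sigma_{pk}(a)=k\sigma_p(a)$ really is the place where the definition of periodicity is consumed, and evaluating at $t=(m/d)p$ and cancelling does yield $d\mid\sigma_p(a)$, which is exactly the criterion for $\varphi^{p}$ to be a skew morphism; the two coset computations then give coset-preservation via Lemma~\ref{lem:kernelpowerfun} applied to $\varphi^{p}$; and the \emph{moreover} clause follows correctly from Proposition~\ref{prop:skewgenorbit} applied to $\varphi^*$ together with the bound $|B:\ker\varphi|\le\ord(\varphi)$ (an injective assignment of power-function values to cosets) fed into Theorem~\ref{thm:orderofskew}. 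The only place you are terser than you should be is the claim $\sigma_{p}(a)=p$ for $a\in\ker\varphi$: this needs $\varphi^i(a)\in\ker\varphi$ for all $i$, i.e.\ Lemma~\ref{lem:kernelabelian}, and this is in fact the point where the abelian hypothesis enters the first assertion; you cite that lemma earlier only to justify the existence of $\varphi^*$, so make the dependence explicit. As a bonus, your argument shows $p_\varphi<\ord(\varphi)$ for every non-trivial skew morphism of an abelian group, slightly more than the cyclic case stated.
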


The following fact will be helpful too.

\begin{lemma}[\cite{ConderJajcayTucker2016}]
\label{lem:product}
Let $\varphi$ be any skew morphism of a finite group $G$, and let $H$ be any finite group. Then $\varphi$ can be extended to a skew morphism $\theta$ of $G\times H$, such that $\theta \restriction_G = \varphi$ and $\ker\theta = \ker\varphi \times H$.
\end{lemma}

We conclude this section with two well-known facts about skew morphisms that are easy exercises, but we include their proofs for completeness.

\begin{lemma}\label{lem:fix}
Let $\varphi$ be a skew morphism of an abelian group $A$. If $\varphi(a)=a$ for some $a\in A$, then $a\in \ker \varphi$.
\end{lemma}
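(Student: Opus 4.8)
The plan is to exploit the commutativity of $B$ by computing $\varphi(ab)$ and $\varphi(ba)$ separately and then comparing them, since $ab=ba$ forces the two expressions to coincide. Throughout I write $\pi$ for the power function of $\varphi$ and set $t=\ord(\varphi)$. The first thing I would record is the elementary observation that a fixed point stays fixed under every power of $\varphi$: if $\varphi(a)=a$, then $\varphi^k(a)=a$ for all $k\ge 0$, which is immediate by induction. This is the only place where the hypothesis $\varphi(a)=a$ genuinely enters.

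Next, using the defining identity $\varphi(xy)=\varphi(x)\varphi^{\pi(x)}(y)$, I would expand the product in the two orders:
\[
\varphi(ab)=\varphi(a)\varphi^{\pi(a)}(b)=a\,\varphi^{\pi(a)}(b),\qquad
\varphi(ba)=\varphi(b)\varphi^{\pi(b)}(a)=\varphi(b)\,a,
\]
where the simplifications use $\varphi(a)=a$ together with $\varphi^{\pi(b)}(a)=a$ from the previous step. Since $B$ is abelian we have $ab=ba$, hence $a\,\varphi^{\pi(a)}(b)=\varphi(b)\,a$; rewriting $\varphi(b)\,a=a\,\varphi(b)$ and cancelling $a$ yields $\varphi^{\pi(a)}(b)=\varphi(b)$. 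As $b\in B$ is arbitrary, this says $\varphi^{\pi(a)}=\varphi$ as permutations of $B$, so $\varphi^{\pi(a)-1}$ is the identity and therefore $t\mid \pi(a)-1$. Because $\pi(a)$ is by convention the representative in $\{1,2,\dots,t\}$, the integer $\pi(a)-1$ lies in $\{0,1,\dots,t-1\}$, leaving $\pi(a)-1=0$ as the only option. Thus $\pi(a)=1$, which is precisely the assertion $a\in\ker\varphi$.

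I do not anticipate a serious obstacle: once one thinks to compare $\varphi(ab)$ with $\varphi(ba)$, the argument is essentially a two-line manipulation, the only care being the justification of $\varphi^{\pi(b)}(a)=a$ and the final bookkeeping with the range of $\pi$. An alternative route would be to pass to the skew product group $B\langle\varphi\rangle$, where $\varphi(a)=a$ gives the conjugacy relation $a^{-1}\varphi a=\varphi^{\pi(a)}$; however, deducing $\pi(a)=1$ from this relation seems to require the same commutativity input (or the core-free property in a less transparent way), so I would favour the self-contained computation above.
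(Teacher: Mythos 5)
Your proof is correct and is essentially the paper's own argument: both compare $\varphi(ab)$ with $\varphi(ba)$, use that a fixed point of $\varphi$ is fixed by all powers of $\varphi$, and conclude $\varphi^{\pi(a)}=\varphi$, whence $\pi(a)=1$ by the normalisation $\pi(a)\in\{1,\dots,\ord(\varphi)\}$. Your version merely spells out the final bookkeeping a little more explicitly.
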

\begin{proof}
Let $a'$ be any element of $A$. Since $a$ is fixed by $\varphi$, we have $\varphi(aa')=a\varphi^{\pi(a)}(a')$. On the other hand, we have $\varphi(aa')=\varphi(a'a)=\varphi(a')\varphi^{\pi(a')}(a)=\varphi(a')a$, and hence $\varphi(a')=\varphi^{\pi(a)}(a')$ for all $a'\in A$. It follows that $\pi(a)=1$, and therefore $a\in \ker \varphi$.
\end{proof}

\begin{lemma}
\label{lem:skewfromorbit}
Let $\varphi$ be a skew morphism of a group $B$ with power function $\pi$, and let $a\in B$. Then:
\begin{align*}
\varphi(a^i) = \varphi(a) \varphi^{\pi(a)}(a) \varphi^{\pi(a^2)}(a)  \dots  \varphi^{\pi(a^{i-1})}(a) \hbox{ for all } i\in \NN .
\end{align*}
\end{lemma}
\begin{proof}
The assertion is trivially true for $i = 1$. Next, if it holds for some positive integer $j$, then $\varphi(a^{j+1}) = \varphi(a^j a) = \varphi(a^j)\varphi^{\pi(a^j)}(a) = \varphi(a) \varphi^{\pi(a)}(a) \varphi^{\pi(a^2)}(a) \dots  \varphi^{\pi(a^{j-1})}(a) \varphi^{\pi(a^j)}(a)$, and so it is also true for $j+1$. Hence the proof follows by induction.
\end{proof}

\section{Skew morphisms of abelian groups}\label{sec:abel}

Several useful facts about skew morphisms of abelian groups (which we also apply in this paper) were proved in~\cite{ConderJajcayTucker2016}. During the preparation of this paper, however, we noticed that three assertions in~\cite{ConderJajcayTucker2016} do not hold. In this section, we list the incorrect findings and provide a counterexample for each of them. We also propose and prove weaker versions of the original statements. Finally, we discuss all proofs in~\cite{ConderJajcayTucker2016} that use at least one of the flawed statements, and show that in all cases it is sufficient to replace the flawed statements by our modifications. For the rest of the section, we let $\varphi$ be a skew morphism of an abelian group $A$ with power function $\pi$. Also, to distinguish between references to this paper and references to~\cite{ConderJajcayTucker2016}, we put an asterisk after each numbered reference in the latter case. 

The first flawed assertion in ~\cite{ConderJajcayTucker2016} is part (b) of Lemma~5.3$^*$. It states that if $N$ is a non-trivial subgroup of $\ker \varphi$ preserved by $\varphi$, and $\varphi$ is not an automorphism of $A$, then $\ord(\varphi)$ has a non-trivial divisor in common with the exponent of $N$. To show that this is not true, note that according to \cite{ConderList} the cyclic group of order $12$ admits a proper skew morphism $\psi$ of order $3$ with kernel (which is preserved by $\psi$) of order $6$. Since $\ker \psi$ is cyclic and preserved by $\psi$, so is its unique subgroup of order $2$. But the exponent of this subgroup, which is $2$, does not have a non-trivial divisor in common with $\ord(\psi)$. We propose the following modification:

\begin{lemma}
\label{lem:tech(b)}
If $\varphi$ is a proper skew morphism of an abelian group $A$, then $\ord(\varphi)$ has a non-trivial divisor in common with the exponent of $\ker\varphi$.
\end{lemma}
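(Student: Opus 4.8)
The plan is to argue by contradiction: I would assume that $\gcd(\ord(\varphi),e)=1$, where $e$ denotes the exponent of $\ker\varphi$, and then contradict Theorem~\ref{thm:kernontrivial}, which asserts that a skew morphism of a non-trivial group has non-trivial kernel. The guiding observation is that the already-corrected Lemma~\ref{lem:tech(a)} applies to the \emph{maximal} admissible choice $N=\ker\varphi$, and that this choice turns the coprimality assumption into a statement forcing the induced quotient skew morphism to have trivial kernel.

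First I would set up the quotient. Since $\varphi$ is proper, $B$ is non-trivial, so $\ker\varphi$ is a non-trivial subgroup of $B$ by Theorem~\ref{thm:kernontrivial}; moreover $\ker\varphi\neq B$ because $\varphi$ is not an automorphism. As $B$ is abelian, $\ker\varphi$ is normal in $B$ and is preserved by $\varphi$ by Lemma~\ref{lem:kernelabelian}, so Proposition~\ref{prop:skew induced} guarantees that the induced map $\varphi^*$ is a genuine skew morphism of the non-trivial quotient group $B/\ker\varphi$. In particular, $N=\ker\varphi$ is a legitimate input for Lemma~\ref{lem:tech(a)}, with $\varphi_N^*=\varphi^*$.

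Next, supposing toward a contradiction that $\gcd(\ord(\varphi),e)=1$, I would invoke Lemma~\ref{lem:tech(a)} with $N=\ker\varphi$. The coprimality hypothesis then forces every $a\in B$ for which $(\ker\varphi)a$ lies in $\ker\varphi^*$ to satisfy $a\in\ker\varphi$. Equivalently, $\ker\varphi^*$ consists only of the identity coset, i.e.\ it is the trivial subgroup of $B/\ker\varphi$. But $B/\ker\varphi$ is non-trivial, so Theorem~\ref{thm:kernontrivial} applied to the skew morphism $\varphi^*$ says that $\ker\varphi^*$ must be non-trivial, a contradiction. Hence $\gcd(\ord(\varphi),e)>1$, which is precisely the assertion that $\ord(\varphi)$ shares a non-trivial divisor with the exponent of $\ker\varphi$.

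The argument is short once the corrected Lemma~\ref{lem:tech(a)} is available, so I do not anticipate a serious obstacle. The only points requiring care are verifying that $N=\ker\varphi$ meets every hypothesis of that lemma (non-triviality from Theorem~\ref{thm:kernontrivial}, $\varphi$-invariance from Lemma~\ref{lem:kernelabelian}, and normality from commutativity of $B$) and confirming that the quotient $B/\ker\varphi$ is genuinely non-trivial, which is exactly where the properness of $\varphi$ enters.
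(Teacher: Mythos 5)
Your proposal is correct and is essentially the paper's own argument: both apply Lemma~\ref{lem:tech(a)} with $N=\ker\varphi$ and invoke Theorem~\ref{thm:kernontrivial} on the induced skew morphism of the non-trivial quotient $B/\ker\varphi$. The only difference is presentational — you run the argument as a contradiction via the ``in particular'' clause of Lemma~\ref{lem:tech(a)}, whereas the paper picks an element $a\in L\setminus\ker\varphi$ directly and reads off $e(\pi(a)-1)\equiv 0\pmod{\ord(\varphi)}$ with $\pi(a)\not\equiv 1$.
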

\begin{proof}
Let $K=\ker \varphi$, let $e$ be the exponent of $K$, and let $L/K$ be the kernel of the skew morphism $\varphi^*$ of $A/K$ induced by $\varphi$. Since $\varphi$ is proper we know that $A/K$ is a non-trivial group, and by Theorem~\ref{thm:kernontrivial} it follows that $L/K$ is non-trivial. In particular, there exists an element $a$ of $A$ such that $a\notin K$ and $a\in L$. It follows that $\pi(a)\not\equiv 1 \pmod{\ord(\varphi)}$, by Lemma~\ref{lem:tech(a)} we have $e\pi(a)\equiv e \pmod{\ord(\varphi)}$, and the rest follows.
\end{proof}

Part (b) of Lemma~5.3$^*$ is used in the proofs of six theorems presented in~\cite{ConderJajcayTucker2016}. Proofs of Theorem~5.4$^*$, Theorem~5.10$^*$, Theorem~6.2$^*$ and Theorem~6.4$^*$ are all easily fixable, since in each case Lemma~5.3(b)$^*$ is applied for $N=K$, and hence it is sufficient to replace it with Lemma~\ref{lem:tech(b)}. The proof of Theorem~6.1$^*$ can also be corrected. Here Lemma~5.3(b)$^*$ is used to show that if $\varphi$ is proper and $A \cong \ZZ_n$, then $\gcd(\ord(\varphi),n) \neq 1$. Since $A$ is cyclic, the exponent of $\ker \varphi$ is equal to the order of $\ker\varphi$ (which necessarily divides $n$), so this is an easy consequence of Lemma~\ref{lem:tech(b)}. Finally, since Theorem~7.3$^*$ was already proved previously in~\cite{KovacsNedela2011}, there is no need to fix its alternative proof presented in~\cite{ConderJajcayTucker2016}; although we believe that it is possible.

Another flawed assertion in ~\cite{ConderJajcayTucker2016} is Theorem~5.6$^*$. It states that if $L/(\ker \varphi)$ is the kernel of the skew morphism $\varphi^*$ of $A/(\ker \varphi)$ induced by $\varphi$, and $p$ is a prime that divides $|L|$ but not $|\ker \varphi|$, then $p<q$ for every prime divisor $q$ of $|\ker \varphi|$. Again, we provide a counterexample that shows that this is not true. According to~\cite{ConderList} the cyclic group of order $42$ admits a proper skew morphism $\rho$ of order $7$ with kernel of order $14$. Since $\ZZ_{42}/(\ker \rho)$ is isomorphic to $\ZZ_3$ and $\ZZ_3$ does not admit any proper skew morphism, it follows that the kernel $L/(\ker \rho)$ (of the skew morphism of $\ZZ_{42}/(\ker \rho)$ induced by $\rho$) is equal to $\ZZ_{42}/(\ker \rho)$. Therefore, $|L|=|\ZZ_{42}|$ and, in particular, $3$ divides $|L|$. But $3$ is greater than $2$, and $2$ is a prime divisor of $|\ker \rho|$. We propose the following modification:

\begin{theorem}
\label{thm:corr}
Let $\varphi$ be a skew morphism of the finite abelian group $A$, let $K=\ker\varphi$, and let $q$ be any prime divisor of $|K|$. Also let $N$ be a subgroup of $K$ consisting of the identity and all elements of order $q$, and let $L/N$ be the kernel of the skew morphism $\varphi_N^*$ of $A/N$ induced by $\varphi$. If $p$ is a prime that divides $|L|$ but not $|K|$, then $p<q$.
\end{theorem}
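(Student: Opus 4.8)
The plan is to pass to the skew product group $G=B\langle\varphi\rangle$ and reduce the statement to a congruence for the power function along the cyclic group generated by a single element of order $p$, which in the end forces a residue of multiplicative order $p$ to exist in the group of units modulo $q$.

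First I would record the routine reductions. Since $B$ is abelian, $\varphi$ restricts to an automorphism of $K=\ker\varphi$ (Lemma~\ref{lem:kernelabelian}), so the characteristic subgroup $N$ (the elements of $K$ of order dividing $q$) is $\varphi$-invariant, normal in $B$, and of exponent $q$; thus $\varphi_N^*$ is a well-defined skew morphism of $B/N$ (Proposition~\ref{prop:skew induced}), and $e=q$ in the notation of Lemma~\ref{lem:tech(a)}. A short power-function computation gives $K\subseteq L$, and since $\varphi_N^*$ preserves its own kernel (Lemma~\ref{lem:kernelabelian} applied to $B/N$), the subgroup $L$ is $\varphi$-invariant. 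As $L$ is abelian with $p\mid|L|$ and $p\neq q$, Cauchy's theorem yields $a\in L$ of order $p$; since the $p$-part of $K$ is trivial (because $p\nmid|K|$), we have $a\notin K$, and likewise $a^k\notin K$ for $0<k<p$ and $\varphi^i(a)\notin K$ for all $i$ (the $\varphi^*$-orbit of $\bar a\neq 1$ in $B/K$ avoids the identity).

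Next I would work in $G$, writing $c=\varphi$, $n=\ord(\varphi)$, and using the standard identities $c\,x=\varphi(x)c^{\pi(x)}$ and $c^{j}x=\varphi^{j}(x)c^{\sigma(x,j)}$ with $\sigma(x,j)=\sum_{i=0}^{j-1}\pi(\varphi^{i}(x))$. Applying Lemma~\ref{lem:tech(a)} (with $e=q$) to the elements $a^{k}$ and $\varphi^{i}(a)$ of $L$ gives $\pi(a^{k})=1+(n/q)t_{k}$ and $\pi(\varphi^{i}(a))=1+(n/q)u_{i}$, where $q\mid n$, $t_{0}=t_{p}=0$, and $t_{k}\neq 0$ modulo $q$ for $0<k<p$. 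From $c\,a^{k}=\varphi(a^{k})c^{\pi(a^{k})}$ one obtains the recursion $\pi(a^{k})=\sigma(a,\pi(a^{k-1}))$, and since $\sigma(a,j)=j+(n/q)\sum_{i<j}u_{i}$ this descends to a recursion for the residues $t_{k}$ modulo $q$. The step I expect to be the main obstacle is controlling the periodic sum $\sum_{i<j}u_{i}$: the sequence $(u_{i})$ has period equal to the length $\ell^{*}$ of the $\varphi^{*}$-orbit of $\bar a$, and I would prove $\ell^{*}\mid n/q$. Indeed $\ell^{*}\mid\ord(\varphi^{*})=p_{\varphi}$ (the remark after Theorem~\ref{thm:powperiod}); $\varphi^{*}$ is induced by $\varphi_N^*$ via $K/N$, so $p_{\varphi}\mid m:=\ord(\varphi_N^*)$; and $a\in L$ forces $m\mid\pi(a)-1=(n/q)t$ with $q\nmid t$, so the $q$-adic valuation satisfies $v_{q}(p_{\varphi})\le v_{q}(m)\le v_{q}(n)-1$, while $p_{\varphi}\mid n$ handles the other primes. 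Hence $\ell^{*}\mid p_{\varphi}\mid n/q$, and every sum over a multiple of $n/q$ terms collapses to a multiple of $\Sigma':=\sum_{i=0}^{n/q-1}u_{i}$.

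With this in hand the recursion reads $t_{k}\equiv(1+\Sigma')\,t_{k-1}+t\pmod q$, with $t_{0}=0$ and multiplier $\lambda:=1+\Sigma'$ viewed in $\mathbb{F}_{q}$. I would then simply solve it: the boundary value $t_{p}\equiv 0$ is incompatible with $\lambda=1$ (which gives $t_{p}=pt\neq 0$ since $q\nmid pt$), so $\lambda\neq 1$ and $t_{k}=t(\lambda^{k}-1)/(\lambda-1)$; now $t_{p}\equiv 0$ forces $\lambda^{p}=1$, whereas $t_{k}\neq 0$ for $0<k<p$ forces $\lambda^{k}\neq 1$ there. Thus $\lambda$ has multiplicative order exactly $p$ in $\mathbb{F}_{q}^{\times}$, so $p\mid q-1$, and in particular $p<q$, as required.
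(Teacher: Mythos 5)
Your argument is correct, but it diverges from the paper's proof at the decisive step, so a comparison is in order. Both proofs share the same opening: pick $a\in L$ of order $p$ (so $\langle a\rangle\cap K=1$), apply Lemma~\ref{lem:tech(a)} with $e=q$ to each power $a^k\in L$ to get $q\mid \ord(\varphi)$ and $\pi(a^k)=1+t_k\,(\ord(\varphi)/q)$ with $t_k\not\equiv 0\pmod q$ for $0<k<p$. From there the paper finishes in two lines by pigeonhole: there are at most $q-1$ admissible values of $\pi$ on the nontrivial powers of $a$, and these values must be pairwise distinct because equal power-function values force two powers of $a$ into the same coset of $K$ (Lemma~\ref{lem:kernelpowerfun}), contradicting $\langle a\rangle\cap K=1$; hence $p-1\le q-1$ and $p\ne q$ gives $p<q$. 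You instead derive the first-order linear recurrence $t_k\equiv\lambda t_{k-1}+t\pmod q$ and show the multiplier $\lambda$ has multiplicative order exactly $p$ in $\mathbb{F}_q^{\times}$. This costs you the extra periodicity analysis (the divisibility $\ell^{*}\mid p_{\varphi}\mid \ord(\varphi)/q$, which you justify correctly via $p_\varphi\mid\ord(\varphi_N^*)\mid\pi(a)-1$ and a $q$-adic valuation count), and it is where most of the risk in your write-up sits; but it buys a genuinely stronger conclusion, namely $p\mid q-1$, i.e.\ $q\equiv 1\pmod p$, rather than merely $p<q$. I checked the delicate points — that the recurrence is well defined modulo $\ord(\varphi)$ independently of the representative chosen for $\pi(a^{k-1})$, that $\varphi^i(a)\in L\setminus K$ for all $i$ so each $u_i$ is defined and the sequence has period dividing $\ell^{*}$, and that the boundary conditions $t_0=t_p=0$, $t_k\ne 0$ hold — and they all go through. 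So this is a valid and in fact sharper proof, at the price of considerably more machinery than the paper's pigeonhole argument.
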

\begin{proof}
Suppose that such a prime $p$ exists. Since $K$ is abelian, we know that $N$ is a subgroup of $K$ of exponent $q$ that is invariant under $\varphi$. Next, let $a$ be any element of order $p$ in $L$, let $m=\ord(\varphi)$, and let $\pi$ be the power function of $\varphi$. Since $L/N$ is the kernel of $\varphi_N^*$, we know by Lemma~\ref{lem:tech(a)} that $q(\pi(a)-1) \equiv 0 \pmod{m}$. If $q$ is relatively prime to $m$, then $\pi(a)\equiv 1 \pmod{m}$ and so $a\in K$, which is impossible since $K$ has no element of order $p$. Thus $q$ divides $m$ and $\pi(a)-1 \equiv 0 \pmod{m/q}$. In particular, $\pi(a) = 1 + i(m/q)$ where $1\leq i \leq q-1$, so there are at most $q-1$ possibilities for $\pi(a)$.

The same holds for every non-trivial power of $a$. So now if $p>q$, then by the pigeonhole principle two different powers of $a$ will have the same value under $\pi$, in which case they lie in the same coset of $N$. But that cannot happen since $K \cap \langle a \rangle$ is trivial. Thus $p<q$.
\end{proof}

The only application of the flawed original version of Theorem~5.6$^*$ is Corollary~5.7$^*$. This final problematic assertion in~\cite{ConderJajcayTucker2016} states that every prime divisor of $|\ker \varphi|$ is greater than every prime that divides $|A|$ but not $|\ker \varphi|$. To see that this is not true, take the skew morphism $\rho$ of $\ZZ_{42}$ with kernel of order $14$ discussed earlier. Since $2$ divides $|\ker \rho|$ and $3$ divides $|\ZZ_{42}|$ but not $|\ker \rho|$, the statement is clearly not true. The following, however, still holds.

\begin{corollary}
\label{cor:order kernel}
Let $A$ be a non-trivial finite abelian group, and let $p$ be the largest prime divisor of $|A|$. Then the order of the kernel of every skew morphism of $A$ is divisible by $p$ when $p$ is odd, or by $4$ when $p=2$. 
\end{corollary}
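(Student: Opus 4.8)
The plan is to induct on $|B|$ (equivalently, to argue from a minimal counterexample), splitting into two cases according to whether the largest prime $p$ dividing $|B|$ is odd or equal to $2$. Throughout I write $K=\ker\varphi$; by Theorem~\ref{thm:kernontrivial} it is non-trivial, and by Lemma~\ref{lem:kernelabelian} it is preserved by $\varphi$, so $\varphi$ restricts to an automorphism of $K$. If $\varphi$ is an automorphism then $K=B$ and there is nothing to prove (for $p=2$ this uses that $|B|$ is then a $2$-power), so I may assume $\varphi$ is proper. Note that when $p=2$ the group $B$ is a $2$-group, so $K$ is automatically a non-trivial $2$-group; the content in that case is to upgrade the divisibility from $2$ to $4$, i.e.\ to rule out $|K|=2$.

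For the odd case I would suppose, for contradiction, that $p\nmid|K|$, and let $q$ be the largest prime dividing $|K|$. Since every prime dividing $|K|$ also divides $|B|$ and $p$ is the largest such prime while $p\nmid|K|$, I get $q<p$. Let $N$ be the subgroup of $K$ consisting of the identity together with the elements of order $q$; this is a non-trivial $\varphi$-invariant subgroup of exponent $q$ (invariance because the automorphism $\varphi|_K$ preserves the set of elements of order $q$), normal in $B$ since $B$ is abelian, so $\varphi_N^*$ is a skew morphism of $B/N$ by Proposition~\ref{prop:skew induced}. The quotient $B/N$ is strictly smaller and its largest prime divisor is still $p$, because $N$ is a $q$-group with $q<p$; hence by the induction hypothesis $p$ divides $|\ker\varphi_N^*|=|L/N|$, and since $p\nmid|N|$ this gives $p\mid|L|$. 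Now Theorem~\ref{thm:corr}, applied with this $q$ and $N$, yields $p<q$, contradicting $q<p$. This closes the odd case.

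For $p=2$ I would assume $|K|=2$ and derive a contradiction. The exponent of $K$ is $2$, so Lemma~\ref{lem:tech(b)} forces $m=\ord(\varphi)$ to be even. Taking $N=K$ in Lemma~\ref{lem:tech(a)} (so $e=2$ and $\varphi_N^*=\varphi^*$), every element $a$ of the preimage $L$ of $\ker\varphi^*$ satisfies $2\pi(a)\equiv 2\pmod m$, hence $\pi(a)\equiv 1\pmod{m/2}$, so $\pi$ takes at most the two values $1$ and $1+m/2$ on $L$. By Lemma~\ref{lem:kernelpowerfun} the elements of $L$ then occupy at most two right cosets of $K$, giving $|L/K|\le 2$; since $\ker\varphi^*$ is non-trivial by Theorem~\ref{thm:kernontrivial}, in fact $|L/K|=2$. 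On the other hand $B/K$ is a strictly smaller $2$-group, so as soon as $|B/K|\ge 4$ the induction hypothesis forces $4\mid|\ker\varphi^*|=|L/K|$, contradicting $|L/K|=2$. Thus $|B/K|=2$, i.e.\ $|B|=4$, which reduces everything to a base case.

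The base case $|B|=4$ is where I expect the only explicit computation: for $B\cong\C_4$ and $B\cong\C_2\times\C_2$ I would check directly that a proper skew morphism with $|K|=2$ must fix $K$ pointwise and act as an involution on $B$, and that every such permutation is already an automorphism, so no proper skew morphism with $|K|=2$ exists. The main obstacle is precisely this extra factor of $2$ when $p=2$: the odd case is a clean induction resting on Theorem~\ref{thm:corr}, whereas ruling out $|K|=2$ needs the sharper coset-counting bound $|L/K|\le 2$ coming from Lemma~\ref{lem:tech(a)}, played off against the inductive lower bound $4\mid|L/K|$, plus the hands-on verification for the two groups of order $4$. I would also flag the single degenerate instance $B\cong\C_2$, where the only skew morphism is the identity with $|K|=2$; this is the one case the statement as written does not cover and should be excluded.
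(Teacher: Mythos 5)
Your proof is correct, and it overlaps with the paper's on the key ingredients while being organized differently. For odd $p$ the paper does not induct on $|B|$: it fixes the hypothesis ``$p\nmid|K|$'' and iterates, showing at each stage via Theorem~\ref{thm:corr} that $p$ cannot divide $|L/N|$ either (else $p<q\leq p$), then passes to $\varphi_N^*$ on $B/N$ and repeats, terminating when some quotient has trivial kernel, which contradicts Theorem~\ref{thm:kernontrivial}. You instead run a strong induction on $|B|$, use the inductive hypothesis on $B/N$ to force $p\mid|L|$, and then contradict Theorem~\ref{thm:corr} in a single step; the two arguments use the same subgroup $N$ of elements of order $q$ and the same theorem, and are essentially an unrolled versus a rolled-up version of one another, with yours slightly shorter and the paper's free of any appeal to the corollary for smaller groups. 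The real divergence is at $p=2$: the paper simply defers to ``the original proof of Corollary~5.7'' in \cite{ConderJajcayTucker2016}, whereas you give a self-contained argument, and your chain (exponent $2$ forces $2\mid m$ by Lemma~\ref{lem:tech(b)}; Lemma~\ref{lem:tech(a)} with $N=K$ gives $\pi(a)\in\{1,1+m/2\}$ on $L$; Lemma~\ref{lem:kernelpowerfun} then gives $|L/K|\leq 2$, against $4\mid|L/K|$ from induction; residual case $|B|=4$ checked by hand using the known fact that $\C_4$ and $\C_2\times\C_2$ admit no proper skew morphisms) is sound. Your observation that the statement as written fails for $B\cong\C_2$ (identity skew morphism, $|K|=2$ not divisible by $4$) is a genuine, if harmless, catch: the paper never applies the corollary to $\C_2$, but the hypothesis should strictly exclude that group.
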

\begin{proof}
Let $\varphi$ be any skew morphism of $A$, let $K=\ker \varphi$, and suppose to the contrary that $p$ does not divide $|K|$. Also let $q$ be any prime divisor of $|K|$, let $N$ be a subgroup of $K$ consisting of the identity and all elements of order $q$, and let $L/N$ be the kernel of the skew morphism $\varphi_N^*$ of $A/N$ induced by $\varphi$. If $p$ divides $|L/N|$, then by Theorem~\ref{thm:corr} we know that $p$ is smaller than $q$, so this cannot happen. It follows that $p$ does not divide $|L/N|$, so we can repeat the same argument for the skew morphism $\varphi_N^*$ of $A/N$ with kernel $L/N$. (Note that $p$ divides $|A/N|$.) Since $A$ is finite, this will eventually terminate for some groups $A'$, $N'$ and $L'$ with $|L'/N'|=1$. Then since the kernel $L'/N'$ is trivial, it follows by Theorem~\ref{thm:kernontrivial} that $A'/N'$ is a trivial group, and hence $|A'| = |N'|$. But this is impossible, since $p$ divides $|A'|$ but not $|N'|$. The second part for $p=2$ follows from the original proof of~\cite[Corollary 5.7]{ConderJajcayTucker2016}. 
\end{proof}

Both applications of Corollary~5.7$^*$, namely the proofs of Theorem~6.2$^*$ and Theorem~9.1$^*$, only use the fact that the order of $\ker \varphi$ is divisible by the largest prime divisor of $|A|$. Since this follows by Corollary~\ref{cor:order kernel}, both theorems still hold, and their proofs can be corrected by minor changes in their wording.

\section{Quotients and their properties}\label{sec:quo}

In this section, we will show that if $BC$ is a complementary product of two cyclic groups with $C$ core-free in $BC$, then not only $C$ corresponds to some skew morphism of $B$ (of order $|C|$), but also $B$ corresponds to some skew morphism of $C$ (of order smaller than $|B|$). Let $\varphi$ be a skew morphism of a cyclic group $B$, let $G=B\langle \varphi \rangle$, and let $C=\langle \varphi \rangle$. Also let $b$ be a generator of $B$. To distinguish between $\varphi$ as a permutation of $B$ and $\varphi$ as a generator of the cyclic group $C$, we use $c$ in the latter case. Since $C$ is core-free in $G$, by Theorem~\ref{thm:Lucc} we have 
\[ |G:B| = |C| < |G:C| = |B|\, , \]
so (again by Theorem~\ref{thm:Lucc}) we find that $B$ has a non-trivial core in $G$. Let $K$ denote the core of $B$ in $G$, and for every $X\leq G$ let $\overline{X}$ denote $XK/K$ ($\cong X/(X \cap K)$). Next, let $H$ be a subgroup of $B$ such that $cHc^{-1} \subseteq B$. Then, since $B$ is cyclic, $H$ is the unique subgroup of $B$ of order $|H|$, and so $cHc^{-1}=H$. It follows that $H$ is normal in $G$, and so it is contained in $K$. Moreover, since $K$ is the core of $B$ in $G$, we have $cKc^{-1}=K\subseteq B$, and hence by Lemma~\ref{lem:kernelskewprod} we find that $K = \ker \varphi$.

Now we look closely at the product $\overline{G}=\overline{B}\hskip 2pt \overline{C}$. First, noting that $K \cap C = \{ 1 \}$ we have $C \cong \overline{C}$ (and so $\overline{C}$ is cyclic, and hence abelian) and $\overline{B}\cap \overline{C} = \{ 1 \}$. Since $B$ is cyclic, so is its quotient $\overline{B}$, and by the definition of $K$ we deduce that $\overline{B}$ is core-free in $\overline{G}$. Now it is straightforward to check that the bijection that maps every element $\overline{d}\in \overline{C}$ to the unique element $\overline{d'} \in \overline{C}$ such that $\overline{d}\hskip 2pt \overline{b} = \overline{b}^j\overline{d'}$ defines a skew morphism of $\overline{C}$, with power function $\overline{\pi}$ given by $\overline{\pi}(\overline{d}) = j$. (We choose $\overline{b}$ to be the image of $b\in B$ under the natural homomorphism from $B$ to $\overline{B}$; since $b$ is a generator of $B$, it follows that $\overline{b}$ is a generator of $\overline{B}$.)

A skew morphism of $C$ ($\cong \overline{C}$) constructed in the way described in the previous paragraph is called the \emph{quotient} of $\varphi$ (with respect to $b$), and will be denoted by $\overline{\varphi}$. Since a cyclic group $\overline{B}$ can be generated by different elements, $\varphi$ can have more than one quotient. (In fact, by~\cite[Remark 5.1]{BachratyConderVerret} this is always true unless B has a unique generator.) In the case of additive notation $B=\ZZ_n$, and unless otherwise specified, by the quotient of $\varphi$ we understand the quotient with respect to $1$.

The above construction (proposed in the author's PhD thesis \cite{phd}) was also introduced independently in~\cite{FengHu}, where it was noted that if $\varphi$ is a skew morphism of $\ZZ_n$ and $\overline{\varphi}$ is a quotient of $\varphi$, then $(\varphi,\overline{\varphi})$ is an $(\ord(\varphi),n)$-reciprocal pair of skew morphisms. A pair $(\varphi,\rho)$ of skew morphisms of $\ZZ_n$ and $\ZZ_m$ with power functions $\pi$ and $\tau$ is called \emph{$(m,n)$-reciprocal} if $\ord(\varphi)$ divides $m$, $\ord(\rho)$ divides $n$, and the congruences $\pi(i)\equiv \rho^i(1) \pmod{\ord(\varphi)}$ and $\tau(j)\equiv\varphi^j(1) \pmod{\ord(\rho)}$ hold for each $i\in \ZZ_n$ and $j\in \ZZ_m$. We note that while every pair $(\varphi,\overline{\varphi})$ gives an $(\ord(\varphi),n)$-reciprocal pair of skew morphisms, not every reciprocal pair arises in this way. For example, there exist $(m,n)$-reciprocal pairs of skew morphisms with $m=n$, but by Theorem~\ref{thm:orderofskew} we know that $\ord(\varphi)$ is always strictly smaller than $n$.

The following observation is an easy consequence of the fact that a skew morphism (of a cyclic group) and its quotient always give a reciprocal pair of skew morphisms. 

\begin{lemma}
\label{lem:quotientmain}
Let $\varphi$ be a skew morphism of $\ZZ_n$ with power function $\pi$, and let $\overline{\varphi}$ be the quotient of $\varphi$ with power function $\overline{\pi}$. Then for every $i\in \NN$:
\begin{enumerate}[label={\rm (\alph*)},ref=\ref{lem:quotientmain}(\alph*)]
\item\label{lem:quotientmain:a} $\pi(i) = \overline{\varphi}^{\, i}(1)$, so in particular $\ord(\overline{\varphi})=n/|\ker\varphi|$; and
\item\label{lem:quotientmain:b} $\varphi^i(1) \equiv \overline{\pi}(i) \pmod{n/|\ker\varphi|}$.
\end{enumerate}
\end{lemma}
\begin{proof}
First, since $(\varphi,\overline{\varphi})$ is an $(\ord(\varphi),n)$-reciprocal pair of skew morphisms, we have $\pi(i)\equiv \overline{\varphi}^{\, i}(1) \pmod{\ord(\varphi)}$. Hence, since both $\pi$ and $\overline{\varphi}$ are mappings into $\ZZ_{\ord(\varphi)}$, it follows that $\pi(i) = \overline{\varphi}^{\, i}(1)$. The second part of (a) follows from the fact that $n/|\ker\varphi|$ is the smallest non-zero integer in $\ker\varphi$. Finally, (b) follows easily as $\overline{\pi}(i)\equiv\varphi^i(1) \pmod{\ord(\overline{\varphi})}$ and $\ord(\overline{\varphi})=n/|\ker\varphi|$. 
\end{proof}

Next we provide a lemma which shows that quotients can be used to check whether a skew morphism of a cyclic group is an automorphism or a coset-preserving skew morphism. 

\begin{lemma}
\label{lem:quotientcp}
A skew morphism $\varphi$ of $\ZZ_n$ is coset-preserving if and only if the quotient $\overline{\varphi}$ of $\varphi$ is an automorphism. Moreover, $\varphi$ is proper if and only if $\overline{\varphi}$ is non-trivial. 
\end{lemma}
\begin{proof}
Let $\varphi$ be a coset-preserving skew morphism of $\ZZ_n$. This is equivalent with $\varphi(1)\equiv 1 \pmod{n/|\ker\varphi|}$, which by Lemma~\ref{lem:quotientmain:b} happens if and only if $\overline{\pi}(1)=1$. This proves the first part. The second part follows easily by Lemma~\ref{lem:quotientmain:a} as $\varphi \in \Aut(\ZZ_n)$ if and only if $\pi(1)=1$, and $\overline{\varphi}$ is trivial if and only if $\overline{\varphi}(1)=1$.   
\end{proof}

We note that a part of the Lemma~\ref{lem:quotientcp} was proved in~\cite{HuNedela}, where it was shown that if $\overline{\varphi}$ is an automorphism, then $\varphi$ is coset-preserving, but not the other way around. Also note that since the only skew morphism of $\ZZ_2$ is the identity mapping, it follows immediately from Lemma~\ref{lem:quotientcp} that if a skew morphism of a cyclic group has order $2$, then it must be an automorphism. (This is also an easy consequence of Lemma~\ref{lem:kernelskewprod}.) Another consequence of Lemma~\ref{lem:quotientcp} is the fact that a proper skew morphism of $\ZZ_n$ is coset-preserving if and only if the quotient of its quotient is the identity mapping. Somewhat interestingly, Lemma~\ref{lem:quotientcp} also implies that by taking quotients every skew morphism of a cyclic group can be reduced to a non-trivial automorphism of the same group.

\section{Using quotients to generate skew morphisms}\label{sec:alg}

In this section, we describe an algorithm for finding recursively skew morphisms of cyclic groups based on various observations about the quotients of skew morphisms.

\subsection{Skew morphisms with a given quotient}

First, we explain how to find all skew morphisms of a cyclic group with a given quotient. The following observation about quotients of skew morphisms of cyclic groups will be useful.

\begin{proposition}
\label{prop:quotientinv}
Let $\varphi$ be a skew morphism of $\ZZ_n$ with power function $\pi$, and let $\overline{\varphi}$ be the quotient of $\varphi$ with power function $\overline{\pi}$. Then:
\begin{enumerate}[label={\rm (\alph*)},ref=\ref{prop:quotientinv}(\alph*)]
\item\label{prop:quotientinv:a} the periodicity $p_{\varphi}$ of $\varphi$ is the smallest generator of $\ker \overline{\varphi}$; 
\item\label{prop:quotientinv:b} $\ord(\varphi)=|\ker\overline{\varphi}| p_{\varphi}$; and
\item\label{prop:quotientinv:c} the orbit $T$ of $\langle \varphi \rangle$ that contains $1$ is expressible in the form
\begin{equation}
\label{eq:quotientinv}
T = (x_1,\dots,x_{p_{\varphi}}, \psi(x_1),\dots, \psi(x_{p_{\varphi}}), \dots, \psi^{\ord(\psi)-1}(x_1), \dots, \psi^{\ord(\psi)-1}(x_{p_{\varphi}})),
\end{equation}
for some coset-preserving skew morphism $\psi$ of $\ZZ_n$ such that $\ord(\psi) = \ord(\varphi) / p_{\varphi}$ and $\psi(1) \equiv 1 \pmod{n/|\ker\varphi|}$, and with $x_1 = 1$ and $x_i \equiv \overline{\pi}(i-1) \pmod{n/|\ker\varphi|}$ for each $i \in \{2,\dots,p_{\varphi}\}$.
\end{enumerate}
\end{proposition}
\begin{proof}
First, by Theorem~\ref{thm:powperiod} we have $p_{\varphi} = p_1$. Then, since the values taken by $\pi$ at any two elements of $\ZZ_n$ are equal if and only if they belong to the same right coset of $\ker\varphi$ in $\ZZ_n$, it follows 
that $p_1$ is the smallest positive integer such that $1 \equiv \varphi^{p_1}(1) \pmod{n/|\ker\varphi|}$, which by Lemma~\ref{lem:quotientmain:b} is equivalent with $1 \equiv \overline{\pi}(p_1) \pmod{n/|\ker\varphi|}$. Noting that $n/|\ker\varphi|=\ord(\overline{\varphi})$, we deduce that $p_1$ is the smallest positive integer such that $\overline{\pi}(p_1)=1$, and (a) follows. Moreover, since $p_{\varphi}$ is the smallest non-trivial element of $\ker\overline{\varphi}$, which is a subgroup of $\ZZ_{\ord(\varphi)}$, it follows that $\ord(\varphi)=|\ker\overline{\varphi}|p_{\varphi}$, which proves (b).  

To prove the final assertion, let $T$ denote the orbit of $\langle \varphi \rangle$ that contains $1$, and let $\psi = \varphi^{p_1}$. By Theorem~\ref{thm:powperiod} we know that $\psi$ is a coset-preserving skew morphism of $\ZZ_n$, and by the definition of the periodicity we have $\psi(1) \equiv 1 \pmod{n/|\ker\varphi|}$. By Proposition~\ref{prop:skewgenorbit} we know that the size of $T$ is equal to $\ord(\varphi)$. Moreover, $p_1$ divides $|T|$ (see \cite[Lemma 3.1]{BachratyJajcay2017}), and hence $\ord(\psi) = \ord(\varphi^{p_1}) = \ord(\varphi)/p_1$, and the effect of $\psi$ on $T$ induces $p_1$ cycles, each of length $\ord(\varphi)/p_1$. Finally, by Lemma~\ref{lem:quotientmain:b} we have $x_i = \varphi^{i-1}(1) \equiv \overline{\pi}(i-1) \pmod{n/|\ker\varphi|}$ and the rest follows.
\end{proof}

Let $\rho$ be a skew morphism of a cyclic group $\ZZ_m$. We will provide a detail explanation of the method for finding all skew morphisms $\varphi$ of $\ZZ_n$ with quotient $\rho$.\footnote{It is important to emphasise here that this method finds all skew morphisms with a particular quotient only for a given cyclic group. If we do not restrict ourselves to a specific group, then in some cases we can find infinitely many skew morphisms with a given quotient. For example, using the classification of skew morphisms for cyclic $p$-groups of odd order (presented in \cite{KovacsNedela2017}) it can be shown that each cyclic $3$-group of order at least $9$ admits a skew morphism whose quotient is the skew morphism $(1,3,5)$ of $\ZZ_6$.}     

First, we find the smallest positive integer $j$ such that $j \in \ker \rho$. Then by Proposition~\ref{prop:quotientinv:a} we have $p_1 = p_\varphi = j$. Next, since $\rho$ is a quotient of $\varphi$, it follows by Lemma~\ref{lem:quotientmain:a} that $\ord(\rho) = n / |\ker\varphi|$, and hence $\ker \varphi$ is the unique subgroup of $\ZZ_n$ of order $n/\ord(\rho)$. As a next step we find all coset-preserving skew morphisms $\psi$ of $\ZZ_n$ satisfying $\ord(\psi) = m/p_{\varphi}$ and $\psi(1) \equiv 1 \pmod{n/|\ker\varphi|}$ (note here that $m = |\ZZ_m| = \ord( \varphi)$). This allows us to identify all possible candidates for the orbit $T$ of $\langle \varphi \rangle$ that contains $1$, using~\eqref{eq:quotientinv}. (For each choice of $\psi$, we have at most $|\ker \varphi|^{p_1 - 1} = (n/\ord(\rho))^{p_1-1}$ candidates; the number of candidates could be smaller since sometimes~\eqref{eq:quotientinv} does not define a cycle on $B$.)

Next, suppose that $\varphi$ is a skew morphism, and let $\varphi_1$ be the cyclic permutation of $T$ induced by $\varphi$. Then by Lemma~\ref{lem:quotientmain:a} we have $\pi(i) = \rho^i(1)$, and hence by Lemma~\ref{lem:skewfromorbit} we find that
\begin{equation}
\label{eq:quodef}
\varphi(i) = \varphi_1(1) + \varphi_1^{\, \rho(1)}(1) + \varphi_1^{\, \rho^2(1)}(1) + \dots +  \varphi_1^{\, \rho^{i-1}(1)}(1)\ \hbox{ for all } i\in \NN.
\end{equation}

As a final step, for each candidate for $\varphi_1$ we use~\eqref{eq:quodef} to define a function $\varphi\!: \ZZ_n \to \ZZ_n$, and then check whether $\varphi$ is a skew morphism of $\ZZ_n$, and $T$ an orbit of $\langle \varphi \rangle$. It can be easily verified that if this is true, then $\rho$ is the quotient of $\varphi$.  

\begin{remark}
Note that to use~\eqref{eq:quodef}, we do not need to know the complete orbit $T$, but only the elements $\varphi_1^{\, \rho^i(1)}(1)$ for each $i\in \{0, 1, \dots, \ord(\rho)\}$. In fact, since for every positive integer $j$ we have $\varphi_1^{\, j+p_1}(1) = \psi(\varphi_1^{\, j}(1))$, only elements of the form $\varphi_1^{\, e}(1)$ with $e \equiv \rho^i(1) \pmod{p_1}$ are needed to define $\varphi$. In some cases, this significantly reduces the number of possible candidates for $\varphi_1$. 
\end{remark}

\subsection{Algorithm for finding all skew morphisms of a cyclic group}\label{subsec:alg}

We are ready to describe our algorithm for finding all skew morphisms of cyclic groups up to any order. This algorithm is recursive in the sense that it takes the sets of all skew morphisms of the groups $\ZZ_m$ for $m \in \{2, 3, \dots, n-1\}$ as input and outputs all skew morphisms of $\ZZ_n$. Since the only skew morphism of $\ZZ_2$ is the identity permutation, the algorithm can be easily initialised.

As the first step, we use the method presented in~\cite{BachratyJajcay2017} to find all coset-preserving skew morphisms of $\ZZ_n$. Further details on this method are available in Section~\ref{subsec:enum}. Next, let $\varphi$ be a skew morphism of $\ZZ_n$ that is not coset-preserving, and let $\overline{\varphi}$ be the quotient of $\varphi$. Then by Lemma~\ref{lem:quotientcp} we know that $\overline{\varphi}$ is a proper skew morphism of $\ZZ_{\ord(\varphi)}$. Moreover, by Theorem~\ref{thm:orderofskew} and Theorem~\ref{thm:orderofskewcyclic} we find that $\ord(\varphi)<n$, and that $\ord(\varphi)$ divides $n\phi(n)$, and $\gcd(\ord(\varphi),n) \neq 1$. Since we know all skew morphisms of $\ZZ_m$ for each $m<n$, and we also know all coset-preserving skew morphisms of $\ZZ_n$, it follows that we can simply apply the method explained in the previous subsection to find all skew morphisms of $\ZZ_n$ that are not coset-preserving. Together with the coset-preserving skew morphisms of $\ZZ_n$ (which include all automorphisms and were found earlier), this gives all skew morphisms of $\ZZ_n$.

A \textsc{Magma}~\cite{Magma} implementation of the described algorithm succeeded in finding all skew morphisms of cyclic groups of order up to $161$.  (The file listing all of these skew morphisms is available at~\cite{List}.)  This significantly improves the previous largest complete list~\cite{ConderList} which goes up to the order $60$. In Table~\ref{tab:cyclic} we summarise the information obtained about skew morphisms of cyclic groups $\ZZ_n$ for $n \leq 161$. We include a group in the table if and only if it admits a proper skew morphism. Moreover, if a listed group admits a proper skew morphism that is not coset-preserving, then the order of the group is preceded by the asterisk character ($^*$). All included groups are listed by their orders, and for each of them we provide the total number of skew morphisms (written as the sum of the numbers of proper skew morphisms and automorphisms), and the number of conjugacy classes of proper skew morphisms in $\Aut(\ZZ_n)$. Note that the automorphism group of a cyclic group $\ZZ_n$ is always abelian, and hence the conjugation action of $\Aut(\ZZ_n)$ on itself is trivial. It follows that the number of conjugacy classes of $\Aut(\ZZ_n)$ is equal to $|\Aut(\ZZ_n)|$. For this reason, we list the number of conjugacy classes only for proper skew morphisms. We also note that the numbers of skew morphisms in Table~\ref{tab:cyclic} for $n \leq 60$ coincide with the numbers of skew morphisms in~\cite{ConderList}.

\begin{table}
\centering
\renewcommand{\arraystretch}{0.8}
\setlength\tabcolsep{2.5pt}
\begin{tabular}{@{}rccrclccrccccrccrclccrccccrccrclccrccc@{}} \toprule
$n$ &&& \multicolumn{3}{c}{$\Skew$} && \multicolumn{3}{c}{Classes} &&&& $n$ &&& \multicolumn{3}{c}{$\Skew$} && \multicolumn{3}{c}{Classes} &&&& $n$ &&& \multicolumn{3}{c}{$\Skew$} && \multicolumn{3}{c}{Classes}\\  \cmidrule{1-10}  \cmidrule{14-23} \cmidrule{27-36}
$6$ &&& $\mathbf{2}$ & $+$ & $2$ && $\ \ $ & $\mathbf{1}$  		&&&&&   $58$ &&& $\mathbf{28}$ & $+$ & $28$ && $\ \ $ & $\mathbf{1}$	&&&&&  $114$ &&& $\mathbf{148}$ & $+$ & $36$ && $\ \ $ & $\mathbf{7}$   \\
$8$ &&& $\mathbf{2}$ & $+$ & $4$ &&& $\mathbf{1}$  			&&&&&  $60$ &&& $\mathbf{80}$ & $+$ & $16$ &&& $\mathbf{17}$   		&&&&&  $116$ &&& $\mathbf{112}$ & $+$ & $56$ &&& $\mathbf{3}$    \\
$^*9$ &&& $\mathbf{4}$ & $+$ & $6$ &&& $\mathbf{2}$  			&&&&&   $62$ &&& $\mathbf{30}$ & $+$ & $30$ &&& $\mathbf{1}$   		&&&&&  $^*117$ &&& $\mathbf{88}$ & $+$ & $72$ &&& $\mathbf{11}$    \\
$10$ &&& $\mathbf{4}$ & $+$ & $4$ &&& $\mathbf{1}$ 			&&&&&   $^*63$ &&& $\mathbf{44}$ & $+$ & $36$ &&& $\mathbf{7}$   		&&&&& $118$ &&& $\mathbf{58}$ & $+$ & $58$ &&& $\mathbf{1}$      \\
$12$ &&& $\mathbf{4}$ & $+$ & $4$ &&& $\mathbf{2}$ 			&&&&&  $^*64$ &&& $\mathbf{268}$ & $+$ & $32$ &&& $\mathbf{42}$   		&&&&& $120$ &&& $\mathbf{208}$ & $+$ & $32$ &&& $\mathbf{43}$       \\ \addlinespace[.2em]
$14$ &&& $\mathbf{6}$ & $+$ & $6$ &&& $\mathbf{1}$  			&&&&&  $66$ &&& $\mathbf{60}$ & $+$ & $20$ &&& $\mathbf{13}$  		&&&&& $^*121$ &&& $\mathbf{900}$ & $+$ & $110$ &&& $\mathbf{90}$       \\ 
$16$ &&& $\mathbf{12}$ & $+$ & $8$ &&& $\mathbf{4}$  			&&&&&   $68$ &&& $\mathbf{64}$ & $+$ & $32$ &&& $\mathbf{3}$   		&&&&& $122$ &&& $\mathbf{60}$ & $+$ & $60$ &&& $\mathbf{1}$       \\
$^*18$ &&& $\mathbf{24}$ & $+$ & $6$ &&& $\mathbf{6}$  			&&&&&  $70$ &&& $\mathbf{72}$ & $+$ & $24$ &&& $\mathbf{11}$			&&&&& $124$ &&& $\mathbf{60}$ & $+$ & $60$ &&& $\mathbf{2}$        \\
$20$ &&& $\mathbf{16}$ & $+$ & $8$ &&& $\mathbf{3}$  			&&&&&  $^*72$ &&& $\mathbf{156}$ & $+$ & $24$ &&& $\mathbf{36}$  		&&&&& $^*125$ &&& $\mathbf{1568}$ & $+$ & $100$ &&& $\mathbf{152}$        \\ 
$21$ &&& $\mathbf{12}$ & $+$ & $12$ &&& $\mathbf{1}$  			&&&&&  $74$ &&& $\mathbf{36}$ & $+$ & $36$ &&& $\mathbf{1}$   		&&&&& $^*126$ &&& $\mathbf{348}$ & $+$ & $36$ &&& $\mathbf{34}$       \\ \addlinespace[.2em]
$22$ &&& $\mathbf{10}$ & $+$ & $10$ &&& $\mathbf{1}$  			&&&&&  $^*75$ &&& $\mathbf{96}$ & $+$ & $40$ &&& $\mathbf{24}$    		&&&&& $^*128$ &&& $\mathbf{1132}$ & $+$ & $64$ &&& $\mathbf{114}$       \\
$24$ &&& $\mathbf{16}$ & $+$ & $8$ &&& $\mathbf{7}$  			&&&&& $76$ &&& $\mathbf{36}$ & $+$ & $36$ &&& $\mathbf{2}$	  		&&&&&  $129$ &&& $\mathbf{84}$ & $+$ & $84$ &&& $\mathbf{1}$      \\ 
$^*25$ &&& $\mathbf{48}$ & $+$ & $20$ &&& $\mathbf{12}$  		&&&&& $78$ &&& $\mathbf{104}$ & $+$ & $24$ &&& $\mathbf{9}$  		&&&&& $130$ &&& $\mathbf{144}$ & $+$ & $48$ &&& $\mathbf{17}$       \\
$26$ &&& $\mathbf{12}$ & $+$ & $12$ &&& $\mathbf{1}$  			&&&&& $80$ &&& $\mathbf{152}$ & $+$ & $32$ &&& $\mathbf{26}$   		&&&&& $132$ &&& $\mathbf{120}$ & $+$ & $40$ &&& $\mathbf{26}$       \\
$^*27$ &&& $\mathbf{64}$ & $+$ & $18$ &&& $\mathbf{20}$  		&&&&& $^*81$ &&& $\mathbf{676}$ & $+$ & $54$ &&& $\mathbf{110}$   		&&&&&  $134$ &&& $\mathbf{66}$ & $+$ & $66$ &&& $\mathbf{1}$      \\ \addlinespace[.2em]
$28$ &&& $\mathbf{12}$ & $+$ & $12$ &&& $\mathbf{2}$  			&&&&& $82$ &&& $\mathbf{40}$ & $+$ & $40$ &&& $\mathbf{1}$   		&&&&& $^*135$ &&& $\mathbf{256}$ & $+$ & $72$ &&& $\mathbf{80}$        \\
$30$ &&& $\mathbf{24}$ & $+$ & $8$ &&& $\mathbf{7}$  			&&&&& $84$ &&& $\mathbf{104}$ & $+$ & $24$ &&& $\mathbf{14}$  		&&&&& $136$ &&& $\mathbf{228}$ & $+$ & $64$ &&& $\mathbf{10}$       \\
$^*32$ &&& $\mathbf{60}$ & $+$ & $16$ &&& $\mathbf{14}$  		&&&&& $86$ &&& $\mathbf{42}$ & $+$ & $42$ &&& $\mathbf{1}$  			&&&&&   $138$ &&& $\mathbf{132}$ & $+$ & $44$ &&& $\mathbf{25}$     \\ 
$34$ &&& $\mathbf{16}$ & $+$ & $16$ &&& $\mathbf{1}$  			&&&&&  $88$ &&& $\mathbf{80}$ & $+$ & $40$ &&& $\mathbf{15}$   		&&&&&  $140$ &&& $\mathbf{240}$ & $+$ & $48$ &&& $\mathbf{29}$      \\
$^*36$ &&& $\mathbf{48}$ & $+$ & $12$ &&& $\mathbf{12}$  		&&&&& $^*90$ &&& $\mathbf{216}$ & $+$ & $24$ &&& $\mathbf{36}$   		&&&&&  $142$ &&& $\mathbf{70}$ & $+$ & $70$ &&& $\mathbf{1}$       \\  \addlinespace[.2em]
$38$ &&& $\mathbf{18}$ & $+$ & $18$ &&& $\mathbf{1}$  			&&&&& $92$ &&& $\mathbf{44}$ & $+$ & $44$ &&& $\mathbf{2}$	  		&&&&&   $^*144$ &&& $\mathbf{552}$ & $+$ & $48$ &&& $\mathbf{96}$       \\
$39$ &&& $\mathbf{24}$ & $+$ & $24$ &&& $\mathbf{1}$			&&&&&  $93$ &&& $\mathbf{60}$ & $+$ & $60$ &&& $\mathbf{1}$  		&&&&& $146$ &&& $\mathbf{72}$ & $+$ & $72$ &&& $\mathbf{1}$       \\
$40$ &&& $\mathbf{44}$ & $+$ & $16$ &&& $\mathbf{9}$	 		&&&&& $94$ &&& $\mathbf{46}$ & $+$ & $46$ &&& $\mathbf{1}$	  		&&&&&   $^*147$ &&& $\mathbf{960}$ & $+$ & $84$ &&& $\mathbf{68}$     \\ 
$42$ &&& $\mathbf{52}$ & $+$ & $12$ &&& $\mathbf{7}$  			&&&&& $^*96$ &&& $\mathbf{272}$ & $+$ & $32$ &&& $\mathbf{58}$   		&&&&&  $148$ &&& $\mathbf{144}$ & $+$ & $72$ &&& $\mathbf{3}$      \\
$44$ &&& $\mathbf{20}$ & $+$ & $20$ &&& $\mathbf{2}$   		&&&&&   $^*98$ &&& $\mathbf{480}$ & $+$ & $42$ &&& $\mathbf{38}$   		&&&&&  $^*150$ &&& $\mathbf{648}$ & $+$ & $40$ &&& $\mathbf{74}$       \\  \addlinespace[.2em]
$^*45$ &&& $\mathbf{16}$ & $+$ & $24$ &&& $\mathbf{8}$ 			&&&&&  $^*99$ &&& $\mathbf{40}$ & $+$ & $60$ &&& $\mathbf{20}$  		&&&&&  $152$ &&& $\mathbf{144}$ & $+$ & $72$ &&& $\mathbf{23}$       \\
$46$ &&& $\mathbf{22}$ & $+$ & $22$ &&& $\mathbf{1}$		  	&&&&&  $^*100$ &&& $\mathbf{512}$ & $+$ & $40$ &&& $\mathbf{42}$  		&&&&&  $^*153$ &&& $\mathbf{64}$ & $+$ & $96$ &&& $\mathbf{32}$       \\
$48$ &&& $\mathbf{64}$ & $+$ & $16$ &&& $\mathbf{20}$			&&&&&   $102$ &&& $\mathbf{96}$ & $+$ & $32$ &&& $\mathbf{19}$  		&&&&&  $154$ &&& $\mathbf{180}$ & $+$ & $60$ &&& $\mathbf{17}$     \\ 
$^*49$ &&& $\mathbf{180}$ & $+$ & $42$ &&& $\mathbf{30}$  		&&&&&  $104$ &&& $\mathbf{132}$ & $+$ & $48$ &&& $\mathbf{13}$   	&&&&&  $155$ &&& $\mathbf{120}$ & $+$ & $120$ &&& $\mathbf{1}$      \\
$^*50$ &&& $\mathbf{152}$ & $+$ & $20$ &&& $\mathbf{18}$    		&&&&&  $105$ &&& $\mathbf{48}$ & $+$ & $48$ &&& $\mathbf{4}$   		&&&&&  $156$ &&& $\mathbf{352}$ & $+$ & $48$ &&& $\mathbf{22}$       \\  \addlinespace[.2em]
$52$ &&& $\mathbf{48}$ & $+$ & $24$ &&& $\mathbf{3}$  			&&&&&   $106$ &&& $\mathbf{52}$ & $+$ & $52$ &&& $\mathbf{1}$  		&&&&&  $158$ &&& $\mathbf{78}$ & $+$ & $78$ &&& $\mathbf{1}$       \\
$^*54$ &&& $\mathbf{246}$ & $+$ & $18$ &&& $\mathbf{33}$		&&&&&   $^*108$ &&& $\mathbf{492}$ & $+$ & $36$ &&& $\mathbf{66}$  	&&&&&  $^*160$ &&& $\mathbf{616}$ & $+$ & $64$ &&& $\mathbf{84}$       \\
$55$ &&& $\mathbf{40}$ & $+$ & $40$ &&& $\mathbf{1}$	  		&&&&&   $110$ &&& $\mathbf{168}$ & $+$ & $40$ &&& $\mathbf{9}$  		     \\ 
$56$ &&& $\mathbf{48}$ & $+$ & $24$ &&& $\mathbf{11}$			&&&&&  $111$ &&& $\mathbf{72}$ & $+$ & $72$ &&& $\mathbf{1}$   	      \\
$57$ &&& $\mathbf{36}$ & $+$ & $36$ &&& $\mathbf{1}$	  		&&&&&  $112$ &&& $\mathbf{192}$ & $+$ & $48$ &&& $\mathbf{36}$   		       \\  \bottomrule\addlinespace[.3em]
\end{tabular}
\caption{Skew morphisms of cyclic groups of order $n$}\label{tab:cyclic}
\end{table}

\subsection{Remarks concerning Table~\ref{tab:cyclic}}\label{subsec:tabrem}

An inspection of Table~\ref{tab:cyclic} suggests various interesting questions regarding skew morphisms of cyclic groups. Possibly the most natural question to ask here is which values $n$ actually appear in Table~\ref{tab:cyclic} or, equivalently, which cyclic groups admit a proper skew morphism. This was answered in~\cite{KovacsNedela2011} for cyclic groups, and later in~\cite{ConderJajcayTucker2016} for all other abelian groups. Specifically, if an abelian group $A$ does not admit any proper skew morphism, then $A$ is cyclic of order $n$ where $n=4$ or $\gcd(n,\phi(n))=1$, or $A$ is an elementary abelian $2$-group.

Next, we look at values $n$ such that $\ZZ_n$ admits (up to conjugacy in $\Aut(\ZZ_n)$) only one proper skew morphism. In~\cite{KovacsNedela2011} this was shown to be true for all cases where $n$ is a product of two distinct primes and $\gcd(n,\phi(n))>1$. The only other value $n$ that appears in Table~\ref{tab:cyclic} and has this property is $n=8$. An interesting question raised in this context is whether this covers all such values $n$, or if there are others.

We are also interested in those cyclic groups that admit only coset-preserving skew morphisms. Unlike general skew morphisms, coset-preserving skew morphisms are well understood for cyclic groups, and, in particular, we can list all coset-preserving skew morphisms of $\ZZ_n$ in polynomial time; see~\cite{BachratyJajcay2017}. Thus, if for some $n$ we can show that all skew morphisms of $\ZZ_n$ are coset-preserving, then we can find all skew morphisms of $\ZZ_n$ much faster than by using the algorithm explained in Section~\ref{subsec:alg}. In the following section we completely solve this problem by characterising all cyclic groups that admit only coset-preserving skew morphisms.

\section{Cyclic groups that admit only coset-preserving skew morphisms}\label{sec:cospres}

In this section we focus on cyclic groups admitting only coset-preserving skew morphisms. Our main theorem is the following:

\begin{theorem}\label{thm:main}
All skew morphisms of $\ZZ_n$ are coset-preserving if and only if $n=2^e m$ with $e\in \{0,1,2,3,4\}$ and $m$ odd and square-free.
\end{theorem}

Note that Theorem~\ref{thm:main} include all groups $\ZZ_n$ that does not admit any proper skew morphism, as in that case either $n=4$, or $(n,\phi(n))=1$, which forces $n$ to be square-free (for if some prime square $p^2$ divides $n$, then $p$ is a common factor of $n$ and $\phi(n)$). In what follows, we will say that the positive integer $n$ is \emph{resolvable} if it is expressible in the form $n=2^e m$ with $e\in \{0,1,2,3,4\}$ and $m$ odd and square-free. The proof of Theorem~\ref{thm:main} is split into two parts; in Section~\ref{subsec:proof1} we show that if $n$ is not resolvable, then $\ZZ_n$ admits a non-coset-preserving skew morphism, and in Section~\ref{subsec:proof2} we show that if $n$ is resolvable, then $\ZZ_n$ does not admit a non-coset-preserving skew morphism. Then in Section~\ref{subsec:enum} we use Theorem~\ref{thm:main} (and further facts about coset-preserving skew morphisms of cyclic groups) to enumerate all skew morphisms for many finite cyclic groups for which no such enumeration was available to date. Finally, in Section~\ref{subsec:4p} we give an example that demonstrates how Theorem~\ref{thm:main} can be applied to find a precise formula for the number of skew morphisms of $\ZZ_n$ in the case when $n$ is resolvable and has a relatively small number of prime factors.

\subsection{Cyclic groups admitting non-coset-preserving skew morphisms}\label{subsec:proof1}

Here we show that if the order of a cyclic group is divisible by $32$ or by the square of an odd prime, then this group admits a skew morphism that does not preserve the cosets of its kernel. To do this, we will use some facts about skew morphisms of $\ZZ_n$ that give rise to a regular Cayley map. First, we have the following: 

\begin{proposition}[\cite{JajcaySiran}]\label{prop:rise}
A skew morphism $\varphi$ of a finite group $B$ gives rise to a regular Cayley map for $B$ if and only if the set of elements of some orbit of $\langle\varphi\rangle$ is closed under taking inverses and generates $B$.
\end{proposition}

We say that a skew morphism $\varphi$ of a group $B$ is \emph{$t$-balanced} if its kernel has index $2$ in $B$. The value $t$ is given by $t=\pi(a)$, where $a$ is any element of $B$ not contained in $\ker\varphi$. In the special case when $t=\ord(\varphi)-1$ we say that $\varphi$ is anti-balanced. For further information on $t$-balanced skew morphisms we refer the reader to~\cite{ConderJajcayTucker2007}. The following observation shows that every coset-preserving skew morphism of $\ZZ_n$ that gives rise to a regular Cayley map is either an automorphism of $\ZZ_n$, or a $t$-balanced skew morphism of $\ZZ_n$.

\begin{lemma}\label{lem:rise}
If $\varphi$ is a coset-preserving skew morphism of $\ZZ_n$ that gives rise to a regular Cayley map, then the index of $\ker\varphi$ in $\ZZ_n$ is at most two. In particular, if $n$ is odd, then $\ker\varphi=\ZZ_n$ and $\varphi$ is an automorphism of $\ZZ_n$.
\end{lemma}
\begin{proof}
Since $\varphi$ gives rise to a regular Cayley map, by Proposition~\ref{prop:rise} there exists some orbit $T$ of $\langle\varphi\rangle$ that is closed under taking inverses and generates $\ZZ_n$. Further, by~\cite[Corollary 3.3]{KovacsNedela2011} we know that $T$ contains some element $t$ such that $\langle t \rangle = \ZZ_n$, and since $T=-T$, we also have $-t \in T$. Next, from the fact that $\varphi$ is coset-preserving we deduce that $t$ and $-t$ are both in the same coset of $\ker\varphi$ in $\ZZ_n$. It follows that $2t\in \ker\varphi$, and noting that $t$ is a generator of $\ZZ_n$, we also have $\gcd(n,t)=1$. Hence $2\in \ker\varphi$, and the rest follows.
\end{proof}

Throughout the proof of the following proposition we repeatedly refer to the classification of regular Cayley maps for cyclic groups given in~\cite{ConderTucker}.

\begin{proposition}\label{prop:product}
If a positive integer $n$ is divisible by $32$ or $p^2$ for some odd prime $p$, then $\ZZ_n$ admits a skew morphism that is not coset-preserving.
\end{proposition}
\begin{proof}
First, assume that $n$ is odd and divisible by $p^2$ for some odd prime $p$. Then there exists a regular Cayley map for $\ZZ_n$ with non-balanced representation (see~\cite[Section 8]{ConderTucker}), and hence there exists a proper skew morphism $\varphi$ of $\ZZ_n$ that gives rise to this Cayley map. Since $\varphi$ is proper and $n$ is odd, by Lemma~\ref{lem:rise} we deduce that $\varphi$ is not coset-preserving.

Next, if $n$ is even and divisible by $p^2$ for some odd prime $p$, then we have $\ZZ_n = \ZZ_\ell \times \ZZ_{2^e}$ with $\ell$ odd. Since $\ell$ is clearly divisible by $p^2$, from the previous paragraph we know that $\ZZ_\ell$ admits a skew morphism $\varphi$ that is not coset-preserving. By Lemma~\ref{lem:product} there exists a skew morphism $\theta$ of $\ZZ_n$ such that $\theta \restriction_{\ZZ_\ell} = \varphi$ and $\ker\theta = \ker\varphi \times \ZZ_{2^e}$. Now it can be easily seen that since $\varphi$ does not preserve the cosets of $\ker\varphi$ in $\ZZ_\ell$, the same is true for $\theta$ and cosets of $\ker\theta$ in $\ZZ_n$.

Finally, let $n$ be even and divisible by $32$, and consider the factorisation $\ZZ_n = \ZZ_{2^e} \times \ZZ_\ell$ with $\ell$ odd. Note that to show that $\ZZ_n$ admits a non-coset-preserving skew morphism, it is sufficient to prove this for $\ZZ_{2^e}$ (and the rest will follow by Lemma~\ref{lem:product}). Let $M(2m,r)$ be the regular Cayley map for $\ZZ_{2m}$ given by~\cite[Definition 3.6]{ConderTucker}. This map is defined for every unit $r$ modulo $m$ such that if $b$ is the largest divisor of $m$ that is relatively prime to $r-1$, then either $b=1$, or $r$ is a root of $-1$ modulo $b$ of multiplicative order $2k$ where $k$ is relatively prime to $m/b$. Let $m=2^{e-1}$, $r=2^{e-3}+1$, and $M=M(2m,r)$. Note that the largest divisor of $m$ relatively prime to $r-1$ is $1$, and hence $b=1$. Also note that $r$ is not a root of $-1$ modulo $m$, and since $e\geq 5$ we have $r^2 \not\equiv 1 \pmod{m}$. It follows that $M$ has no balanced, no $t$-balanced, and no anti-balanced representation; see~\cite[Section 8]{ConderTucker}. Since every automorphism of $\ZZ_{2^e}$ gives rise to a skew morphism with a balanced representation, and every skew morphism of $\ZZ_{2^e}$ with kernel of index $2$ in $\ZZ_{2^e}$ gives rise to a skew morphism with either $t$-balanced or anti-balanced representation, we deduce that a skew morphism $\varphi$ of $\ZZ_{2^e}$ that gives rise to $M$ has kernel of index greater than two in $\ZZ_{2^e}$. Hence by Lemma~\ref{lem:rise} we find that $\varphi$ is not coset-preserving.
\end{proof}

\subsection{Cyclic groups admitting only coset-preserving skew morphisms}\label{subsec:proof2}

Next we show that if the positive integer $n$ is resolvable, then all skew morphisms of $\ZZ_n$ are coset-preserving. We start with the following technical lemma.

\begin{lemma}\label{lem:tech}
Let $\varphi$ be a skew morphism of a cyclic group $\ZZ_n$, let $N$ be any non-trivial subgroup of $\ker \varphi$, let $\varphi_N^*$ be the skew morphism of $\ZZ_n/N$ induced by $\varphi$, and let $L/N$ be the kernel of $\varphi_N^*$. Also let $s$ be a prime factor of $n/|\ker \varphi|$, let $k_s$ denote the largest power of $s$ that divides $|(\ker\varphi)/N|$, and let $a=n/(s|\ker\varphi|)$ be an element of $\ZZ_n$. If $sk_s$ divides $|L/N|$, then $a\notin \ker\varphi$ and $a\in L$.  
\end{lemma}
\begin{proof}
First, since $a|\ker\varphi|=n/s<n$, we have $a\notin \ker\varphi$. (Note that this part is true regardless of whether $sk_s$ divides $|L/N|$.) Next, let $K=\ker\varphi$, and let $m$ be an integer such that $|K/N|=mk_s$. (Observe that $m$ and $k_s$ are relatively prime.) Then since $K/N$ is a subgroup of $L/N$, we know that $mk_s$ divides $|L/N|$. But $|L/N|$ is also divisible by $sk_s$, and since $\gcd(m,k_s)=1$, it follows that $|L/N|$ must be divisible by $msk_s$. Hence $|L|$ is divisible by $s|K|$, and therefore $a\in L$.  
\end{proof}

We are now ready to prove the key part of the proof of Theorem~\ref{thm:main}.

\begin{proposition}\label{prop:main}
Let $n=2^em$ with $e\in \{0,1,2,3,4\}$ and $m$ odd and square-free. Then every skew morphism of $\ZZ_n$ is coset-preserving.
\end{proposition}
\begin{proof}
Suppose to the contrary that the assertion is not true, and let $n$ be the smallest resolvable integer such that $\ZZ_n$ admits a skew morphism $\varphi$ that is not coset-preserving. Also let $K=\ker\varphi$, let $\varphi^*$ denote the skew morphism of $\ZZ_n/K$ induced by $\varphi$, and let $\overline{\varphi}$ be a quotient of $\varphi$. Since the only skew morphism of the trivial group is clearly coset-preserving, we have $n>1$. Recalling that every skew morphism of a non-trivial group has a non-trivial kernel, it follows that $|K|\geq 2$. In particular, it follows that $|K|$ has at least one prime factor. We proceed by considering the two following cases: 

\medskip\noindent
\textbf{Case (a)}: $|K|$ is a prime power
\medskip

Let $p$ be the largest prime divisor of $n$. Then by Corollary~\ref{cor:order kernel} we know that $p$ divides $|K|$. If $p=2$, then $n=2$, $4$, $8$, or $16$, in which case $\ZZ_n$ does not admit a skew morphism that is not coset-preserving; see \cite{ConderList}. Hence $p$ is odd and $|K|=p$, and thus $|\ZZ_n/K|=n/p$. Then since $p$ is the largest prime divisor of $n$, we know that $p$ does not divide $|\ZZ_n/K|\phi(|\ZZ_n/K|)$, and it follows from Theorem~\ref{thm:orderofskewcyclic} that $p$ does not divide the order of $\varphi^*$. Further, by Lemma~\ref{lem:tech(b)} we know that $p$ divides $\ord(\varphi)$, and since $\ord(\varphi^*)=p_{\varphi}$, we deduce that $p$ divides $\ord(\varphi^{p_{\varphi}})$. On the other hand, noting that $\varphi^{p_{\varphi}}$ preserves the cosets of $K$ in $\ZZ_n$, we have $\ord(\varphi^{p_{\varphi}})\leq |K| = p$, and hence $\ord(\varphi^{p_{\varphi}}) = p$. Therefore $\ord(\varphi)=pp_{\varphi}$, and then by Proposition~\ref{prop:quotientinv:b} we have $|\ker\overline{\varphi}|=p$. But $p$ does not divide $(n/p)$, which (by Lemma~\ref{lem:quotientmain:a}) is the order of $\overline{\varphi}$, and so by Lemma~\ref{lem:tech(b)} we find that $\overline{\varphi}$ is a group automorphism. Hence by Lemma~\ref{lem:quotientcp} we deduce that $\varphi$ is coset-preserving, contradiction.

\medskip\noindent
\textbf{Case (b)}: $|K|$ has at least two distinct prime factors
\medskip

Let $k=|K|$, and let $d$ denote the integer satisfying $n=kd$. (Note that the elements of $K$ are exactly the multiples of $d$ modulo $n$.) Also let $d=r_1\dots r_{\ell}$ be a factorisation such that each factor is either an odd prime or the maximum possible power of two. Since $\varphi$ does not preserve the cosets of $K$ in $\ZZ_n$, we know that $\varphi(1)\not\equiv 1 \pmod{d}$. Hence, noting that all factors $r_i$ for $i\in \{1,\dots,\ell\}$ are pairwise relatively prime, it follows from the Chinese Remainder Theorem that there exists $r\in \{r_1,\dots,r_\ell\}$ such that $\varphi(1)\not\equiv 1 \pmod{r}$. We will show that this cannot happen.

First, assume that $r$ is odd. It follows that $r$ is a prime, and also that $n$ is not divisible by $r^2$. (And so, in particular, $r$ does not divide $|K|$.) Let $p$ be any prime factor of $|K|$, let $N$ be the unique subgroup of $K$ of order $p$, and let $\varphi^*_N$ be the skew morphism of $\ZZ_n/N$ induced by $\varphi$. Also let $L/N$ be the kernel of $\varphi^*_N$, and suppose that $|L/N|$ is not divisible by $r$. Since the order of the cyclic group $\ZZ_n/N$ is clearly resolvable, by the assumption of minimality of $n$ we know that $\varphi^*_N$ is coset-preserving. Then since $r$ divides $\ZZ_n/N$ but not $L/N$, we have $\varphi(1) \equiv \varphi^*_N(1)\equiv 1 \pmod{r}$. But this contradicts the fact that $\varphi(1)\not\equiv 1 \pmod{r}$, and hence we deduce that $r$ divides $|L/N|$. Now, since $r$ does not divide $|K|$, it follows that $r$ does not divide $|K/N|$, and thus we may use Lemma~\ref{lem:tech} (with $s=r$ and $k_s=1$). Hence we deduce that the element $a=d/r$ of $\ZZ_n$ is not contained in $K$, but also $a\in L$, and by Lemma~\ref{lem:tech(a)} it follows that $p\pi(a)\equiv p \pmod{\ord(\varphi)}$. Since $p$ was an arbitrary prime factor of $|K|$, the same is true also for some other prime factor $q$ of $|K|$. (Here we use the assumption that the order of $K$ has at least two distinct prime factors.) Hence we have
\begin{align*}
 p\pi(a)&\equiv p \pmod{\ord(\varphi)} , \\
 q\pi(a)&\equiv q \pmod{\ord(\varphi)} ,  
\end{align*}
for a pair of distinct primes $p$ and $q$. Then since $\gcd(p,q)=1$, we deduce that $\pi(a) \equiv 1 \pmod{\ord(\varphi)}$, and consequently $a\in K$, contradiction.

Next, assume that $r$ is even. Again let $p$ denote a prime factor of $|K|$, and define $N$, $\varphi^*_N$, and $L/N$ same as in the case when $r$ was odd. Also let $k_2$ denote the largest power of $2$ that divides $|K/N|$, and suppose that $|L/N|$ is not divisible by $2k_2$. Noting that $K/N$ is a subgroup of $L/N$, it follows that the largest power of $2$ that divides $|L/N|$ must be $k_2$. Then since $\varphi^*_N$ must be coset-preserving (due to the minimality of $n$) and the largest power of $2$ that divides $|\ZZ_n/N|$ is equal to $rk_2$, we deduce that $\varphi(1) \equiv \varphi^*_N(1)\equiv 1 \pmod{r}$, contradicting the fact that $\varphi(1)\not\equiv 1 \pmod{r}$. Hence it follows that $2k_2$ divides $|L/N|$, and Lemma~\ref{lem:tech} (in this case we take $s=2$ and $k_s=k_2$) implies that the element $a=d/2$ of $\ZZ_n$ satisfies $a\notin K$ and $a\in L$. Using the same argument as for $r$ odd this again leads to a contradiction.
\end{proof}

Theorem~\ref{thm:main} now follows directly from Propositions~\ref{prop:product} and~\ref{prop:main}.

\subsection{Enumeration}\label{subsec:enum}

In~\cite{BachratyJajcay2017} it was shown that each coset-preserving skew morphism $\varphi$ of $\ZZ_n$ is uniquely determined by the following four parameters: the smallest non-zero element $d$ of $\ker\varphi$; the element $h$ of $\ZZ_n$ such that $\varphi(1)=1+h$; the smallest positive integer $s$ such that $\varphi(d)=sd$; and the positive integer $e=\pi(1)$. (Note that $s$ always exists since $d\in \ker\varphi$ and $\varphi$ restricts to an automorphism of $\ker\varphi$.) Using various properties of coset-preserving skew morphisms it can be checked that if $\varphi$ is non-trivial, then the parameters $d$, $h$, $s$ and $e$ must satisfy the following properties (see~\cite[Section 4]{BachratyJajcay2017} for details):

\begin{enumerate}[label={\rm (\roman*)}]
\item\label{item:1} all four parameters are positive integers; 
\item\label{item:2} $d$ is a proper divisor of $n$;
\item\label{item:3} $s<n/d$ and $\gcd(s,n/d)=1$;
\item\label{item:4} $h$ is a multiple of $d$ strictly smaller than $n$;
\item\label{item:5} if $r$ is the smallest positive integer such that $h\sum_{i=0}^{r-1}  s^i \equiv 0 \pmod{n}$,\\ then $e$ is a (multiplicative) unit modulo $r$ of order $d$ and $e<r$; 
\item\label{item:6} $sd\equiv \sum\limits_{j=0}^{d-1}(1+h\sum\limits_{i=0}^{\ell_j} s^i) \pmod{n}$, where $\ell_j = e^j-1 \bmod{r}$; and
\item\label{item:7} $s^{e-1} \equiv 1 \pmod{n/d}$.
\end{enumerate}

On the other hand, for each set of parameters $(d,h,s,e)$ satisfying all of the above properties there exists a unique non-trivial coset-preserving skew morphism of $\ZZ_n$ (which can be constructed in a straightforward way) with this parameter set; see~\cite[Section 5]{BachratyJajcay2017}. This gives a one-to-one correspondence between non-trivial coset-preserving skew morphisms of $\ZZ_n$ and the sets of parameters $(d,h,s,e)$, and this correspondence can be used to find all coset-preserving skew morphisms of a given cyclic group in a polynomial time (in the cardinality of the group). Hence, by Theorem~\ref{thm:main} we can quickly find all skew morphisms of $\ZZ_n$, where $n$ is resolvable. Using the above observations, we developed an algorithm that can enumerate all skew morphisms for any given cyclic group of any resolvable order. A \CC\ implementation of this algorithm succeeded in enumerating all skew morphisms for cyclic groups of resolvable orders smaller than $10000$ within a second, even without parallelisation. In comparison, the best available method to date for finding all skew morphisms for cyclic groups of general order (described in Section~\ref{subsec:alg}) is computationally feasible only up to order $161$. In our enumeration, which is available at~\cite{Enum}, we provide the total number of skew morphisms of $\ZZ_n$, and also the total number of automorphisms and their proportion among all skew morphisms.

\subsection{Skew morphisms of $\ZZ_{4p}$}\label{subsec:4p}

Although coset-preserving skew morphisms can be generated efficiently, there is no known explicit formula for the number of coset-preserving skew morphisms of $\ZZ_n$ for general $n$. Such a formula would be useful, and in the case when $n$ is resolvable it would give the number of all skew morphisms of $\ZZ_n$. Resolvable integers $n$ with the simplest structure (with respect to their prime factorisation) are primes, in which case all skew morphisms of $\ZZ_n$ are automorphisms of $\ZZ_n$. The situation is also completely understood in the case when $n$ is a product of two distinct primes $p$ and $q$, in which case the number of skew morphisms of $\ZZ_{pq}$ is $(p-1)(q-1)$ if $\gcd(p,q-1)=\gcd(p-1,q)=1$, and $2(p-1)(q-1)$ otherwise; see~\cite{KovacsNedela2011} for example. Here we go one step further and find a formula for the number of skew morphisms of $\ZZ_{4p}$, where $p$ is an odd prime. We will use the following fact:

\begin{proposition}\label{prop:orders 4p}
Let $p$ be an odd prime. If $\varphi$ is a proper skew morphism of $\ZZ_{4p}$, then the action of $\varphi$ on its kernel is trivial.
\end{proposition}
\begin{proof}
Let $\pi$ and $K$ denote the power function and the kernel of $\varphi$, and let $\varphi^*$ be the skew morphism of $\ZZ_n/K$ induced by $\varphi$. Note that by Theorem~\ref{thm:main} we know that $\varphi$ is coset-preserving, and so $\varphi^*$ must be trivial. Further, by Corollary~\ref{cor:order kernel} we know that $p$ divides $|K|$, and since $\varphi$ is proper it follows that the order of $K$ is either $p$ or $2p$. We proceed by considering these two cases. In each case, we let $T$ be the orbit of $\langle \varphi \rangle$ that contains $1$. Note that by Proposition~\ref{prop:skewgenorbit} we have $|T|=\ord(\varphi)$.

\medskip\noindent
\textbf{Case (a)}: $|K| = p$
\medskip

Since $\varphi$ is coset-preserving, we know that the cosets of $K$ in $\ZZ_n$ are preserved set-wise by $\varphi$. Note that only one element of the coset $1+K$ does not generate $\ZZ_n$ (either $p$ or $3p$, depending on whether $p \equiv 1 \pmod{4}$ or $p \equiv 3 \pmod{4}$), and since by Lemma~\ref{lem:fix} no elements outside of $K$ are fixed by $\varphi$, it follows that each orbit of $\langle \varphi \rangle$ on $1+K$ generates $\ZZ_n$. Hence by Proposition~\ref{prop:skewgenorbit} we know that all of these orbits have size $\ord(\varphi)$, and since $|1+K| = |K| = p$, we see that $\ord(\varphi) = p$. Since $\varphi$ restricts to an automorphism of $K$, and $\ord(\varphi)=|K|=p$, we deduce that the action of $\varphi$ on $K$ is trivial.
 
\medskip\noindent
\textbf{Case (b)}: $|K| = 2p$
\medskip

In this case, since $\varphi$ is proper, by Theorem~\ref{thm:orderofskewcyclic} we have $\gcd(\ord(\varphi),4p)>1$. If the order of $\varphi$ is odd, then we have $\ord(\varphi)=p$ or $\ord(\varphi)=3p$, but the latter case can be easily excluded since $\varphi$ must preserve both cosets of $K$ in $\ZZ_n$ of size $2p$. 

Next we deal with the case when $\ord(\varphi)$ is even. Note that by Lemma~\ref{lem:quotientmain:a} we have $\pi(1)=\overline{\varphi}(1)$, and since $\overline{\varphi}$ is an automorphism of the cyclic group $\ZZ_{\ord(\varphi)}$ of even order, we deduce that $\pi(1)$ is odd. We will use this observation to show that both $p$ and $3p$ are contained in some orbits of $\langle \varphi \rangle$ that generate $\ZZ_n$. Suppose to the contrary that this is not true. Since every element of $1+K$ other than $p$ and $3p$ generates $\ZZ_n$ and no element of $1+K$ is fixed by $\varphi$, we must have $\varphi(p) = 3p$ and $\varphi(3p) = p$. Then since $\pi(1)$ is odd, we have $\varphi^{\pi(1)}(p) = 3p$, and therefore $\varphi(1+p) = \varphi(1)+\varphi^{\pi(1)}(p) = \varphi(1)+3p$. On the other hand, we have $\varphi(p+1) = \varphi(p)+\varphi^{\pi(p)}(1) = 3p+\varphi^{\pi(p)}(1)$. But then $\varphi(1)=\varphi^{\pi(p)}(1)$, and since $|T|=\ord(\varphi)$ we find that $\pi(p)=1$. This forces $p \in K$, contradicting the fact that the order of $K$ is $2p$. Hence we deduce that all orbits of $\langle \varphi \rangle$ on $1+K$ generate $\ZZ_n$. In particular, $\ord(\varphi)$ divides $2p$, and it follows that $\ord(\varphi) = 2p$. 

We have shown that if $|K| = 2p$, then $\ord(\varphi)$ is equal to $p$ or $2p$. Hence by order considerations it can be easily seen that $\varphi$ acts on $K$ either trivially, or by the inversion. To exclude the latter, first note that $1$ and $\varphi(1)$ are in the same coset of $K$ in $\ZZ_{4p}$, and hence $\varphi(1)-1 \in K$. If we let $h=\varphi(1)-1$, then we have $\varphi^2(1) = \varphi(h+1)=\varphi(h)+\varphi(1) = -h + h + 1= 1$, but then by Proposition~\ref{prop:skewgenorbit} we have $\ord(\varphi)=2$, which is impossible. Hence we again conclude that the action of $\varphi$ on $K$ is trivial.
\end{proof}

Using Theorem~\ref{thm:main} and Proposition~\ref{prop:orders 4p} we can now easily enumerate all proper skew morphisms of $\ZZ_{4p}$.

\begin{theorem}
If $p$ is an odd prime, then the number of skew morphisms of $\ZZ_{4p}$ is
\[
    \begin{cases}
      6p-6 & \text{if}\ p\equiv 1\ (\bmod{\, 4})\\
      4p-4 & \text{if}\ p\equiv 3\ (\bmod{\, 4}).
    \end{cases}  
\]
\end{theorem}
\begin{proof}
Throughout this proof we refer to the properties \ref{item:1} to \ref{item:7} and the parameters $d$, $h$, $s$ and $e$ of coset-preserving skew morphisms for cyclic groups explained in Section~\ref{subsec:enum}. We know that $|\Aut(\ZZ_{4p})|=2p-2$, so we proceed by counting proper skew morphisms of $\ZZ_{4p}$. Let $\varphi$ be a proper skew morphism of $\ZZ_{4p}$, and recall that by Theorem~\ref{thm:main} it is coset-preserving. Let $d$, $h$, $s$ and $e$ be the four defining parameters of $\varphi$, and note that by Proposition~\ref{prop:orders 4p} we have $s=1$. Since $p$ is the largest prime divisor of $4p$, by Corollary~\ref{cor:order kernel} we know that $p$ divides $|\ker\varphi|$, and it follows that $d=2$ or $d=4$. (The case $d=1$ can be excluded as $\varphi$ is proper.)

First let $d=2$, and let $h$ be any positive multiple of $2$ strictly smaller than $4p$. If $4$ divides $h$, then by \ref{item:5} we find that $r=p$ and $e=p-1$. Since $s=1$, both \ref{item:3} and \ref{item:7} are trivially true, and \ref{item:6} holds as $(1+h)+(1+h(p-1))\equiv 2 + hp \equiv 2 \pmod{4p}$. If $4$ does not divide $h$ and $h\neq 2p$, then by \ref{item:5} we have $r=2p$ and $e=2p-1$. Again both \ref{item:3} and \ref{item:7} hold trivially, and \ref{item:6} is also true since $(1+h)+(1+h(2p-1) \equiv 2 +2hp \equiv 2 \pmod{4p}$. If $h=2p$, then it can be easily verified that $\ord(\varphi)=r=2$, contradicting the fact that $\varphi$ is proper. Since for all but one choice of $h$ we obtain exactly one coset-preserving skew morphism, it follows that in this case we have exactly $2p-2$ skew morphism of $\ZZ_{4p}$. 

Next let $d=4$, and let $h$ be any positive multiple of $4$ strictly smaller than $4p$. Then by \ref{item:5} we deduce that $r=p$, and $e$ must be a fourth root of unity modulo $p$. It follows that necessarily $p\equiv 1 \pmod{4}$, in which case there are two possible candidates for $e$. Note that for either candidate we have $e^2\equiv -1 \pmod{p}$ and $e^3\equiv -e \pmod{p}$. Again \ref{item:3} and \ref{item:7} hold trivially, and \ref{item:6} holds as well since $(1+h)+(1+he)+(1+h(p-1))+(1+h(p-e))\equiv 4 + 2hp \equiv 4 \pmod{4p}$. Hence, if $p\equiv 1 \pmod{4}$, then every choice of $h$ gives two coset-preserving skew morphisms (one for each choice of $e$), which gives a total of $2p-2$ skew morphisms of $\ZZ_{4p}$.
\end{proof}

\nocite{*}
\bibliographystyle{amsplain-my}
\bibliography{quotients}
\end{document}